\def\XXint#1#2#3{{\setbox0=\hbox{$#1{#2#3}{\int}$ }
		\vcenter{\hbox{$#2#3$ }}\kern-.6\wd0}}
\numberwithin{figure}{section} 
\numberwithin{table}{section}
\title{A robust solver for $\bm{H}(\rm \lowercase{curl})$ convection-diffusion and its local Fourier analysis\thanks{Submitted to the editors DATE.
		\funding{The work of Shuonan Wu is supported in part by the Beijing Natural Science Foundation No. 1232007 and the National Natural Science Foundation of China grant No. 12222101.}}}
\author{Jindong Wang \thanks{School of Mathematical Sciences, Peking University, Beijing 100871, China
		(\email{jdwang@pku.edu.cn}).}
	\and Shuonan Wu\thanks{School of Mathematical Sciences, Peking University, Beijing 100871, China
		(\email{snwu@math.pku.edu.cn}, \url{https://www.math.pku.edu.cn/teachers/snwu/index.html}).}}
\begin{document}

\maketitle

\begin{abstract}
In this paper, we present a robust and efficient multigrid solver based on an exponential-fitting discretization for 2D $\bm{H}$(curl) convection-diffusion problems. By leveraging an exponential identity, we characterize the kernel of $\bm{H}(\rm curl)$ convection-diffusion problems and design a suitable hybrid smoother. This smoother incorporates a lexicographic Gauss-Seidel smoother within a downwind type and smoothing over an auxiliary problem, corresponding to $H(\rm grad)$ convection-diffusion problems for kernel correction. We analyze the convergence properties of the smoothers and the two-level method using local Fourier analysis (LFA). The performance of the algorithms demonstrates robustness in both convection-dominated and diffusion-dominated cases.
\end{abstract}

\begin{keywords}
$\bm{H}(\rm curl)$ convection-diffusion, exponential-fitting, downwind smoother, kernel correction, multigrid, local Fourier analysis
\end{keywords}

\begin{MSCcodes}
65F10, 65N30, 65N55, 35Q60
\end{MSCcodes}

\section{Introduction}\label{sec:intro}

In this paper, we consider a robust solver based on simplex-averaged finite element (SAFE) method \cite{wu2020simplex} for the $\bm{H}({\rm curl})$ convection-diffusion problem
\begin{align}
{\rm curl} (\varepsilon {\rm curl}\bm{u}+\bm{\beta}\times\bm{u})+\gamma \bm{u} &= \bm{f} ~\text{ in } \Omega,\label{eq:curlcd}\\
\bm{n}\times \bm{u}&=\bm{0}~\text{ on }\partial \Omega\label{eq:curlbdy},
\end{align}
where $\Omega\subset\mathbb{R}^2$ is a polygonal domain, $\varepsilon>0$ is diffusion coefficient, 
$\bm{\beta}\in \bm{W}^{1,\infty}(\Omega)$ denotes the convection field, 
$\gamma$ is a positive constant and $\bm{n}$ is the unit outward normal to $\partial \Omega$. The ${\rm curl}$ operators play different roles at different occurrences. More specifically, the curl is defined as
${\rm curl} \bm{u} := \frac{\partial u_2}{\partial x} - \frac{\partial u_1}{\partial y}$ on the vector-valued function  and ${\rm curl} u := \nabla^\perp u$  on the scalar function, and the cross product is  expressed as $\bm{\beta}\times \bm{u} := \bm{\beta}\cdot \bm{u}^\perp$. Here, $(x,y)^\perp := (y, -x)$.

This problem arises in many applications, typically in  magnetohydrodynamics (MHD) \cite{gerbeau2006mathematical} with the external velocity, where 
the convection coming from the Ohm's law. 
The variational form derived from equations \eqref{eq:curlcd}-\eqref{eq:curlbdy} can be expressed as follows: Find $\bm{u}\in \bm{H}_0(\rm curl,\Omega)$ such that
\begin{equation} \label{eq:variational}
a(\bm{u}, \bm{v}) := (\varepsilon {\rm curl}\bm{u} + \bm{\beta}\times\bm{u}, {\rm curl}\bm{v}) + (\gamma \bm{u}, \bm{v}) = (\bm{f},\bm{v})\quad \forall \bm{v}\in \bm{H}_0(\rm curl,\Omega). 
\end{equation}
One of the main numerical challenges for this problem is the numerical stability with respect to $\varepsilon$ for the convection-dominated case, i.e., $\varepsilon\ll |\bm{\beta}|$. Such an issue has been well-studied for the closely related scalar convection-diffusion problem, presented in conservation form as:
\begin{equation}\label{eq:gradcd}
	-\nabla \cdot (\varepsilon \nabla u+\bm{\beta}u)+\gamma u = f, \quad {\rm in }~ \Omega.
\end{equation}
In this scalar case, standard finite element methods often encounter severe numerical oscillations and instability, particularly due to the presence of interior or boundary layers. Various stabilization techniques have been developed to address this challenge, with many for quasi-uniform meshes, including streamline upwind Petrov Galerkin (SUPG) method \cite{hughes1979multidimentional,brooks1982streamline}, Galerkin least squares finite element method \cite{hughes1989new,franca1992stabilized}, bubble function stabilization \cite{baiocchi1993virtual,brezzi1994choosing,brezzi1998applications}, local projection stabilization (LPS) \cite{becker2001finite,braack2006local,matthies2007unified}, continuous interior penalty (CIP) \cite{burman2004edge,burman2005unified,burman2006continuous} and exponential-fitting method \cite{o1991globally,dorfler1999uniform,sacco1998finite,sacco1999nonconforming,wang1997novel,xu1999monotone,brezzi1989two}.

Noting that the above results are given in $H(\rm grad)$ space. Designing an effective solver for convection-diffusion problems poses a significant challenge due to the inherent asymmetry and indefiniteness of these problems. Various specialized smoothers and multilevel methods have been developed to address these challenges. For instance, a crosswind block iterative method for two-dimensional problems was proposed in \cite{wang1999crosswind}, which divides the problem into crosswind blocks and updates them along the downwind direction to serve as an efficient smoother. A robust smoothing strategy for both two-dimensional and three-dimensional convection-diffusion problems was introduced in \cite{bey1997downwind}. This strategy involves an ordering technique for the grid points known as {\it downwind numbering}, which follows the flow direction and leads to robust multigrid convergence. Regarding the design of efficient solver, a practical and robust multigrid method for convection-diffusion problems was presented in  \cite{kim2003multigrid}. This method is based on a new coarsening technique derived from the graph corresponding to the stiffness matrix, which preserves the properties of the $M$-matrix. A V-cycle iteration using crosswind-block reordering of the unknowns  is proposed in \cite{kim2004uniformly}, which exhibits uniform convergence without any constraint on coarse grids. 

For vector convection-diffusion problems, various stabilization methods have been extensively explored to enhance the stability and accuracy of numerical solutions \cite{heumann2013stabilized, heumann2015stabilized,wang2022discontinuous,wang2024hybridizable}. Recent research has introduced a unified framework for constructing exponentially-fitted basis functions \cite{wang2023exponentially}. Additionally, the development of the Simplex-Averaged Finite Element (SAFE) method \cite{wu2020simplex}, which incorporates operator exponential-fitting techniques, addresses both scalar and vector cases of convection-diffusion problems, advancing stabilization capabilities without introducing additional parameters. A related work on the SAFE method, based on mimetic finite-difference (MFD), is presented in \cite{adler2023stable}.

When designing solvers in the $\bm{H}(\rm curl)$ space, one major challenge is the presence of a larger kernel space, which complicates the development of effective smoothers. There is extensive literature on solvers for Maxwell's equations. Hiptmair \cite{hiptmair1998multigrid} introduced a multigrid method incorporating a Poisson problem as an auxiliary space correction to derive a hybrid smoother. A corresponding two-dimensional local Fourier analysis (LFA) for this multigrid method is provided in \cite{boonen2008local}. Arnold et al. \cite{arnold2000multigrid} demonstrated that with appropriate finite element spaces and suitable additive or multiplicative Schwarz smoothers, the multigrid V-cycle can serve as an efficient solver and preconditioner. Domain decomposition preconditioners for Maxwell's equations are discussed in \cite{pasciak2002overlapping}. Algebraic multigrid methods for solving eddy current approximations to Maxwell's equations are proposed in \cite{bochev2003improved, hu2006toward}. Furthermore, leveraging auxiliary space preconditioning, the Hiptmair-Xu (HX) preconditioner \cite{hiptmair2007nodal} has proven effective in $\bm{H}(\rm curl)$ elliptic problems and is further discussed in \cite{kolev2009parallel}.

For $\bm{H}(\rm curl)$ convection-diffusion problems, there still exists a large kernel space, as well as the asymmetry and indefiniteness of the problem, which also poses challenges. Therefore, there is an urgent need to design smoothers that can effectively capture the kernel  while also utilizing the information of convection field. In this paper, we aim to integrate efforts to address this issue and our contribution is listed as follows:

(1) On structured grids, we borrow idea of  the SAFE scheme to give  a robust discretization and clearly define the downwind direction of vector problems. This allows us to develop a  downwind smoother, thereby integrating convection field information into the iteration.

(2) We aim to precisely characterize the kernel space of $\bm{H}(\rm curl)$  convection-diffusion problems and design an appropriate hybrid smoother to facilitate the development of an efficient geometry multigrid algorithm.

(3) We mainly utilize the local Fourier analysis (LFA) to analyze the properties of hybrid smoother and two-level multigrid for two-dimensional $\bm{H}(\rm curl)$  convection-diffusion problems. The LFA has been introduced for multigrid analysis by Brandt \cite{brandt1977multi}. This provides quantitative insights into the effectiveness and robustness of the hybrid smoother and multigrid algorithm we have developed.

The rest of paper is organized as follows. The SAFE discretization on structured grids, along with its matrix form is described in Section \ref{sec:preliminary}. Section \ref{sec:solver} introduces the corresponding downwind smoother, hybrid smoother, and multigrid algorithm. In Section \ref{sec:lfa}, the LFA of the proposed algorithm is presented. Some numerical results to demonstrate the efficiency of the algorithm are given in Section \ref{sec:numerical}. Finally some concluding remarks are given in Section \ref{sec:conclude}.

\section{Simplex-averaged finite element (SAFE) on rectangle grids}\label{sec:preliminary}
In this section, we extend the idea of the SAFE scheme \cite{wu2020simplex} to rectangular grids, followed by an exploration of the structure of the discrete system. These discussions on matrix or stencil structures not only offer a more intuitive representation of the SAFE algorithm but also lay the groundwork for the subsequent local Fourier analysis (LFA). 

\subsection{SAFE in variational form}
The rectangular grids considered here are denoted by $\mathcal{T}_h$. For the sake of LFA analysis, we limit our focus to square grids where each element has a side length of $h$. Following the principles outlined in SAFE \cite{wu2020simplex}, stable discrete schemes comprise two key ingredients: standard finite elements forming a discrete exact sequence, and identities of exponentially-fitted type. Below, we will delve into each of these components individually.

\paragraph{Discrete de Rham complex on structured grids} We utilize the notation from the monograph \cite{boffi2013mixed}. For a rectangular element $T$, we define 
$P_{k_1,k_2}(T) := \{p: p = \sum_{i\leq k_1, j\leq k_2} a_{ij} x_1^i x_2^j \}$.
The finite element spaces on rectangle grids considered in this paper also facilitates a discrete de Rham complex, characterized by
\begin{align}
\quad 
\mathcal{L}_{[1]}^1 \xrightarrow{\text { grad }} &\mathcal{N}_{[0]} \xrightarrow{\text { curl }} \mathcal{L}_{[0]}^0, \label{eq:2D-deRham}
\end{align}
where the finite element spaces are defined as 
$$ 
\begin{aligned}
\mathcal{L}_{[1]}^1 &:= \{v \in H^1(\Omega):~ v|_T \in \mathcal{L}_{[1]}^1(T) := P_{1,1}(T), \forall T \in \mathcal{T}_h\}, \\
\mathcal{N}_{[0]} &:= \{\bm{v} \in \bm{H}({\rm curl};\Omega):~ \bm{v}|_T \in \mathcal{N}_{[0]}(T) := P_{0,1}(T) \times P_{1,0}(T), \forall T \in \mathcal{T}_h\}, \\
\mathcal{L}_{[0]}^0 &:= \{v \in L^2(\Omega):~ v|_T \in \mathcal{L}_{[0]}^0(T) :=P_{0,0}(T), \forall T \in \mathcal{T}_h\}. 
\end{aligned}
$$ 
We note that the degrees of freedom associated with vertices, edges, and elements clearly reflect geometric properties (see Figure \ref{fig:curl-DOFs} for $\mathcal{N}_{[0]}(T)$). The canonical interpolation can be induced by the degrees of freedom and is denoted by $\Pi_h^{\rm grad}$, $\Pi_h^{\rm curl}$, and $\Pi_h^{\rm 0}$.

\paragraph{Identities of exponentially-fitted type} Firstly, we define the following fluxes:
 \begin{align} 
 	\mathcal{J}^{\rm grad}_{\varepsilon, \bm{\beta}} {u} &:= \varepsilon{\rm grad} {u}+\bm{\beta}{u} 
	= \varepsilon({\rm grad} u+ \varepsilon^{-1}\bm{\beta}{u}),  \label{eq:Jgrad} \\
 	\mathcal{J}^{\rm curl}_{\varepsilon, \bm{\beta}} \bm{u}&:= \varepsilon{\rm curl } \bm{u}+\bm{\beta}\times\bm{u} = \varepsilon({\rm curl}\bm{u}+  \varepsilon^{-1}\bm{\beta}\times\bm{u}), \label{eq:Jcurl}
 \end{align}
where $\mathcal{J}^{\rm curl}_{\varepsilon, \bm{\beta}}$ is exactly the operator acting on $\bm{u}$ in the variational form \eqref{eq:variational}. In \cite[Lemma 3.1]{wu2020simplex}, a unified identity is provided for several types of flux operators, enabling the design of exponentially-fitted finite element methods. This paper primarily focuses on the discussion of $\bm{H}({\rm curl})$ problems, thus we only list two particular cases of this unified identity, namely when $\bm{\beta}/\varepsilon$ is constant:
\begin{equation} \label{eq:Jcurl-exp}
\mathcal{J}_{\varepsilon, \bm{\beta}}^{\rm grad}{u} = \varepsilon E_{\bm{\beta}/\varepsilon}^{-1}{\rm grad} E_{\bm{\beta}/\varepsilon}{u}, \qquad
	\mathcal{J}_{\varepsilon, \bm{\beta}}^{\rm curl}\bm{u}=\varepsilon E_{\bm{\beta}/\varepsilon}^{-1} {\rm curl} E_{\bm{\beta}/\varepsilon}\bm{u}.
\end{equation}
Here, for any vector field $\bm{\delta}$, we denote $E_{\bm{\delta}} (\bm{x}) := \exp(\bm{\delta}\cdot\bm{x})$. This implies that flux operators can be obtained via exponential transformation and its inverse, combined with standard differential operators.

Below, we present the discrete flux operator for $\bm{H}({\rm curl})$ problems. Firstly, we utilize a piecewise constant approximation for $\varepsilon$ and $\bm{\beta}$, denoted as $\bar{\varepsilon}$ and $\bar{\bm{\beta}}$ respectively. Then, we formally apply the local commutative property between the canonical interpolation and the differential operator. For instance, ${\rm curl} \Pi_T^{\rm curl} = \Pi_T^{0} {\rm curl}$, leading to the expression:
$$
(\Pi_T^{0} E_{\bar{\bm{\beta}}/\bar{\varepsilon}}) \bar{\varepsilon}^{-1} \mathcal{J}^{\rm curl}_{\bar{\varepsilon}, \bar{\bm{\beta}}} {\bm{u}} 
= \Pi_T^{0} {\rm curl} (E_{\bar{\bm{\beta}}/\bar{\varepsilon}} \bm{u})= {\rm curl}\circ \Pi_T^{\rm curl}  (E_{\bar{\bm{\beta}}/\bar{\varepsilon}} \bm{u}).
$$
This yields the following discrete flux operator.
\begin{definition}[discrete flux operator] \label{df:discreteflux}
	The local discrete flux operator $\mathcal{J}_{\bar{\varepsilon},\bar{\bm{\beta}},T}^{\rm curl}$ is defined by
	\begin{equation}\label{eq:discreteflux}
		\mathcal{J}^{\rm curl}_{\bar{\varepsilon}, \bar{\bm{\beta}},T} \bm{v}_h := \bar{\varepsilon} (\Pi_T^{0} E_{\bar{\bm{\beta}}/\bar{\varepsilon}})^{-1} \circ {\rm curl} \circ \Pi_T^{\rm curl} (E_{\bar{\bm{\beta}}/\bar{\varepsilon}} \bm{v}_h) \quad \forall \bm{v}_h\in \mathcal{N}_{[0]}(T).
	\end{equation}
\end{definition}
\begin{remark}[well-posedness of discrete flux operator]
In the design of the SAFE scheme, a critical issue is the well-posedness of the discrete inverse exponential transformation $(\Pi_T^{0} E_{\bar{\varepsilon}^{-1}\bar{\bm{\beta}}})^{-1}$ in the discrete flux operator \eqref{eq:discreteflux}. This is relatively straightforward for lowest-order elements \cite{wu2020simplex}, while discussions on higher-order elements can be found in \cite{wu2020unisolvence}. Another significant concern is stable implementation, particularly as the diffusion coefficient $\varepsilon \to 0^+$. In the next section, we will clarify these points by providing specific expressions for the discrete system.
\end{remark}

For the discrete space $V_h := \mathcal{N}_{[0]} \cap \bm{H}_0({\rm curl};\Omega) := \{\bm{v}_h \in \mathcal{N}_{[0]}: \bm{v}_h \times \bm{n} = 0 \text{ on }\partial\Omega \}$, directly applying the discrete flux operator \eqref{eq:discreteflux} yields the variational form of the SAFE scheme: Find $\bm{u}_h \in V_h$ such that 
$$
a_h(\bm{u}_h, \bm{v}_h) = (\bm{f}, \bm{v}_h) \qquad \forall \bm{v}_h \in V_h,
$$ 
where
\begin{equation} \label{eq:SAFE}
a_h(\bm{u}_h,\bm{v}_h):= \sum_{T \in \mathcal{T}_h}(\mathcal{J}^{\rm curl}_{\bar{\varepsilon}, \bar{\bm{\beta}},T} \bm{u}_h, {\rm curl}\bm{v}_h)_T + (\gamma \bm{u}_h, \bm{v}_h) \quad \bm{u}_h,\bm{v}_h\in V_h.
\end{equation}

\begin{remark}[SAFE scheme on 3D cubic grid]
Similarly, for the 3D problem on a cubic grid, we can utilize the following 3D de Rham complex:
\begin{align}
	\mathcal{L}_{[1]}^1 \xrightarrow{\text { grad }} &\mathcal{N}_{[0]} \xrightarrow{\text { curl }} \mathcal{RT}_{[0]} \xrightarrow{\text { div }} \mathcal{L}_{[0]}^0. \label{eq:3D-deRham}
\end{align}
The definition of the corresponding discrete space can be found in \cite{boffi2013mixed}. Following this complex, we can fully replicate the above process to define the corresponding discrete flux operators.  This enables us to derive the SAFE scheme for 3D $\bm{H}$(curl) convection-diffusion problems on cubic grid. 
\end{remark}

\subsection{Local stiffness matrix}
We provide the local stiffness matrix in \eqref{eq:SAFE}, and demonstrate the well-posedness of the discrete flux in SAFE through this derivation. We are working within the element \(T = [0,h] \times [0,h]\), with its four edges $e_i$ ($i=1,2,3,4$). The degrees of freedom and basis functions $\bm{\phi}_i$ of its \(\bm{H}(\text{curl})\) space are illustrated in Figure \ref{fig:curl-DOFs}. 
\begin{figure}
 \centering
 \begin{minipage}{.4\textwidth}
  \includegraphics[width=1\textwidth]{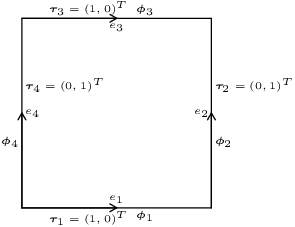}
 \end{minipage}
 \begin{minipage}{.4\textwidth}
  $${\rm DOFs}:\ l_i(\bm{v}):=\int_{e_i} \bm{v}\cdot \bm{\tau}_i$$
   $$\begin{aligned}
  \bm{\phi}_1&=\begin{pmatrix}
   {h-x_2\over h^2}\\0
  \end{pmatrix},~\bm{\phi}_2=\begin{pmatrix}
 0\\ {x_1\over h^2}
  \end{pmatrix},\\
  \bm{\phi}_3&=\begin{pmatrix}
   {x_2\over h^2}\\0
  \end{pmatrix},~\bm{\phi}_4=\begin{pmatrix}
  0\\{h-x_1\over h^2}
  \end{pmatrix}. \end{aligned}
  $$
 \end{minipage}
 \caption{DOFs and basis of $\mathcal{N}_{[0]}$ on $T = (0,h)^2$.} \label{fig:curl-DOFs}
\end{figure}

In 2D discrete de Rham complex \eqref{eq:2D-deRham}, the subsequent space of $\mathcal{N}_{[0]}$ is \(\mathcal{L}_{[0]}^0\), also known as the piecewise constant space. Thus, the discrete inverse exponential transformation in \eqref{eq:discreteflux} has the explicit form:
$$
(\Pi_T^0 E_{\bar{\bm{\beta}}/\bar{\varepsilon}})^{-1} = h^2 \left( \int_0^h \mathrm{e}^{\bar{\beta}_1 x_1 /\bar{\varepsilon} } \mathrm{d}x_1 \right)^{-1}
 \left( \int_0^h \mathrm{e}^{\bar{\beta}_2 x_2 /\bar{\varepsilon} } \mathrm{d}x_2 \right)^{-1}.
$$ 

For any $\varepsilon > 0$, we define the (1D) Bernoulli function:
\begin{equation}\label{eq:bernoulli}
	B_\varepsilon(s) := \varepsilon \frac{1}{\int_0^1 e^{sx/\varepsilon}{\rm d}x}.
\end{equation}
Then, by denoting $\bar{b}_i := \bar{\beta}_i h, i=1,2$, a direct calculation shows 
$$
\begin{aligned}
\mathcal{J}^{\rm curl}_{\bar{\varepsilon}, \bar{\bm{\beta}},T} \bm{\phi}_1 
&= h^{-1} \bar{\varepsilon} \left( \int_0^h \mathrm{e}^{\bar{\beta}_2 x_2 /\bar{\varepsilon} } \mathrm{d}x_2 \right)^{-1}
= h^{-2} \bar{\varepsilon} \left( \int_0^1 \mathrm{e}^{\bar{b}_2 x_2 /\bar{\varepsilon} } \mathrm{d}x_2 \right)^{-1} = h^{-2}B_{\bar{\varepsilon}}(\bar{b}_2), \\
\mathcal{J}^{\rm curl}_{\bar{\varepsilon}, \bar{\bm{\beta}},T} \bm{\phi}_3
&= - h^{-1} \left( \int_0^h \mathrm{e}^{\bar{\beta}_2 x_2 /\bar{\varepsilon} } \mathrm{d}x_2 \right)^{-1} \mathrm{e}^{\bar{\beta}_2 h/\bar{\varepsilon}}
= -h^{-2}B_{\bar{\varepsilon}}(-\bar{b}_2),\\
\mathcal{J}^{\rm curl}_{\bar{\varepsilon}, \bar{\bm{\beta}},T} \bm{\phi}_2 &= h^{-2}B_{\bar{\varepsilon}}(-\bar{b}_1), \qquad 
\mathcal{J}^{\rm curl}_{\bar{\varepsilon}, \bar{\bm{\beta}},T} \bm{\phi}_4 = -h^{-2}B_{\bar{\varepsilon}}(\bar{b}_1).
\end{aligned} 
$$
Therefore, the local stiffness matrix is 
\begin{equation} \label{eq:local-stiff}
\left[(\mathcal{J}^{\rm curl}_{\bar{\varepsilon}, \bar{\bm{\beta}},T} \bm{\phi}_i, {\rm curl}\bm{\phi}_j)\right]_{ij}
= \frac{1}{h^2}
\begin{bmatrix}
B_{\bar{\varepsilon}}(\bar{b}_2) & B_{\bar{\varepsilon}}(\bar{b}_2) & -B_{\bar{\varepsilon}}(\bar{b}_2) & -B_{\bar{\varepsilon}}(\bar{b}_2) \\
B_{\bar{\varepsilon}}(-\bar{b}_1) & B_{\bar{\varepsilon}}(-\bar{b}_1) & -B_{\bar{\varepsilon}}(-\bar{b}_1) & -B_{\bar{\varepsilon}}(-\bar{b}_1) \\
-B_{\bar{\varepsilon}}(-\bar{b}_2) & -B_{\bar{\varepsilon}}(-\bar{b}_2) & B_{\bar{\varepsilon}}(-\bar{b}_2) & B_{\bar{\varepsilon}}(-\bar{b}_2) \\
-B_{\bar{\varepsilon}}(\bar{b}_1) & -B_{\bar{\varepsilon}}(\bar{b}_1) & B_{\bar{\varepsilon}}(\bar{b}_1) & B_{\bar{\varepsilon}}(\bar{b}_1) \\
\end{bmatrix}.
\end{equation}

\subsection{Grid operator form and stencil representation}
This subsection aims to characterize the discrete system using stencil terminology. Our objective is to depict the geometric properties (vertices, edges, faces) of structured grids using indices, a task that is particularly straightforward for structured grids. For instance, the grid points of rectangular grids can be mapped to two-dimensional integers as $\bm{x} = h\bm{k} + (x_1^0, x_2^0) \mapsto \bm{k} \in \mathbb{Z}^2$, for any grid point $\bm{x}$. Without loss of generality, we can designate the point $(0,0)$ as a grid point. In this case, $(x_1^0, x_2^0)$ can be chosen as $(0,0)$. 

At this point, a node $\bm{x}=(k_1,k_2)h$ is referred simply to $(k_1,k_2)$. Edges and elements will be referred using their center points. For instance, $({r_1+s_1\over2},{r_2+s_2\over2})$ refers to the edge connecting the nodes $(r_1,r_2)$ and $(s_1,s_2)$, and $({p_1+q_1+r_1+s_1\over 4},{p_2+q_2+r_2+s_2\over 4})$ refers to the element formed by nodes $(p_1,p_2),(q_1,q_2),(r_1,r_2)$, and $(s_1,s_2)$. 

\paragraph{Index sets} When characterizing discrete problems on rectangular grids using stencil terminology, it is convenient to extend the grid $\mathcal{T}_h$ to infinity. For simplicity, we introduce a displaced index set $I(\Delta_1,\Delta_2)$, as utilized in \cite{boonen2008local}. Here, with $\Delta_1,\Delta_2\in\mathbb{R}$, this set is defined as
$$
I(\Delta_1,\Delta_2):=\{(k_1+\Delta_1,k_2+\Delta_2):k_1,k_2\in\mathbb{Z}\}.
$$
This allows us to define the sets of indices for all nodes, horizontal and vertical edges, and faces respectively as follows: 
\begin{equation} \label{eq:index-set}
N:=I(0,0), \quad E_1:=I({1\over 2},0), \quad E_2:=I(0,{1\over 2}), \quad F:=I({1\over 2},{1\over 2}).
\end{equation}
We also denote $E = E_1 \cup E_2$ as the index set of edges. 

\paragraph{Grid functions} By associating geometric quantities with indices, grid functions defined on them can also be acquired by selecting appropriate indices. Following the notation of \cite{boonen2008local}, we employ $\mathcal{N}$, $\mathcal{E}$, and $\mathcal{F}$ to signify grid function spaces defined on $N$, $E$, and $F$, respectively. 

For a function $w$ defined on $\mathbb{R}^2$, we can naturally interpolate it into grid functions based on the indices. Specifically, for an index set $S$ (where $S$ can be the node index set $N$, the edge index set $E$, or the face index set $F$), we define the grid function
\begin{equation} \label{eq:grid-function}
w_{S}(\bm{k}) := w(h\bm{k})  \quad\forall \bm{k} \in S, \quad S=N,E,F.
\end{equation}
Further, for a grid function $f$, we denote $f_{\bm{k}}:=f(\bm{k})$, provided it is well-defined.

\paragraph{Grid operators} 
We refer to a grid operator as a linear mapping on grid function spaces. Specifically, $L: \mathcal{E} \to \mathcal{E}$ is called an edge operator, which has the following stencil representation:
\begin{equation}\label{eq:stencil}
	(L f)_{\bm{k}} := \left\{
	\begin{aligned}
		\sum_{\bm{t} \in X} l_{\bm{t}}^{[1]} f_{\bm{k} + \bm{t}} \quad & \text{if } \bm{k} \in E_1, \\
		\sum_{\bm{t} \in X} l_{\bm{t}}^{[2]} f_{\bm{k} + \bm{t}} \quad & \text{if } \bm{k} \in E_2,
	\end{aligned} \right. \qquad \forall f \in \mathcal{E}, \quad \text{with }X := I(0,0) \cup I(\frac{1}{2}, \frac{1}{2}).
\end{equation}
Here, $l_{\bm{t}}^{[1]}$ and $l_{\bm{t}}^{[2]}$ denote the coefficients of the horizontal and vertical stencils, respectively.

\paragraph{Stencil representation of SAFE scheme}
Note that the degrees of freedom for $\mathcal{N}_{[0]}$ are the tangential component to the edges. Therefore, the bilinear form $a_h(\cdot, \cdot)$ in the SAFE scheme \eqref{eq:SAFE} can actually be considered as an edge operator that enforces boundary conditions. For the stiffness and mass components, the operator form of $a_h(\cdot, \cdot)$ can be written as $A := K + \gamma M$. Using the local stiffness matrix \eqref{eq:local-stiff}, we obtain the stencil representations for $K$ and $M$ as follows:
\begin{subequations} \label{eq:stencil-SAFE}
\begin{equation} \label{eq:stencil-h}
\begin{aligned}
	K\big|_{E_1}: & {1\over h^2} \left[\begin{array}{rcr}
		\circ & -B_{\bar{\varepsilon}}(-\bar{b}_2) & \circ \\
		-B_{\bar{\varepsilon}}(\bar{b}_1) & & B_{\bar{\varepsilon}}(-\bar{b}_1) \\
		\bullet & B_{\bar{\varepsilon}}(\bar{b}_2)+B_{\bar{\varepsilon}}(-\bar{b}_2) & \bullet \\
		B_{\bar{\varepsilon}}(\bar{b}_1) & & -B_{\bar{\varepsilon}}(-\bar{b}_1) \\
		\circ & -B_{\bar{\varepsilon}}(\bar{b}_2) & \circ
	\end{array}\right], \\
	 M\big|_{E_1}: & ~~~~ \left[\begin{array}{rcr}
		\circ & {1\over 6} & \circ \\
		0 & & 0 \\
		\bullet & {2\over 3} & \bullet \\
		0& & 0 \\
		\circ & {1\over 6} & \circ
	\end{array}\right],
\end{aligned}
\end{equation}
and
\begin{equation} \label{eq:stencil-v}
\begin{aligned}
	K\big|_{E_2}:& {1\over h^2} \left[\begin{array}{ccccc}
		\circ & -B_{\bar{\varepsilon}}(-\bar{b}_2) & \bullet &  B_{\bar{\varepsilon}}(-\bar{b}_2)  & \circ \\
		-B_{\bar{\varepsilon}}(\bar{b}_1)& & B_{\bar{\varepsilon}}(\bar{b}_1)+B_{\bar{\varepsilon}}(-\bar{b}_1) & & -B_{\bar{\varepsilon}}(-\bar{b}_1) \\
		\circ & B_{\bar{\varepsilon}}(\bar{b}_2) & \bullet & -B_{\bar{\varepsilon}}(\bar{b}_2) & \circ
	\end{array}\right], \\
	M\big|_{E_2}:&~~~~ \left[\begin{array}{ccccc}
		\circ & 0 & \bullet & 0 & \circ \\
		\frac{1}{6} & & \frac{2}{3} & & \frac{1}{6} \\
		\circ & 0 & \bullet & 0 & \circ
	\end{array}\right].
\end{aligned}
\end{equation}
\end{subequations}
Here, solid circles represent the nodes of the edge to which the result of the stencil operator is assigned, while all other nodes are represented by open circles.

\begin{remark}[stencil on 3D cubic grids] For the 3D cubic grids, we can still derive stencil representation as described above. Moreover, the structural property of the grid also leads to the stencil form depending solely on 1D Bernoulli function \eqref{eq:bernoulli}. This contrasts with unstructured grids, where the two-dimensional Bernoulli function is required \cite{wu2020simplex}.
\end{remark}

\section{A robust solver for SAFE}\label{sec:solver}
In this section, we present a fast and stable iterative solver tailored for the SAFE discretization. With the introduction of the convection term, the design of its smoother exhibits two key characteristics: (i) we devise a lexicographic Gauss-Seidel smoother within a downwind type; (ii) recognizing the expansive kernel space of the $\bm{H}({\rm curl})$ convection-diffusion operator, we introduce a corresponding smoothing mechanism. The combination of these two components yields a {hybrid smoother} implemented on a single grid. Subsequently, we employ the classical multigrid method to develop a solver that consistently addresses both convection and diffusion.

\subsection{Downwind lexicographic Gauss-Seidel smoother}\label{subsec:downwind}
Starting from the stencil representation \eqref{eq:stencil-SAFE}, we will discuss the design scheme of the lexicographic Gauss-Seidel smoother. First, we examine the influence of convection velocity on the stencil representation. Here, we first consider the case of constant diffusion and coefficient velocity field, i.e., $\varepsilon$ and $\bm{\beta}=(\beta_1,\beta_2)^T$ are constant, ensuring same stencil across all horizontal (or vertical) edges. Notably, the 1D Bernoulli function \eqref{eq:bernoulli} exhibits the following property:
\begin{equation} \label{eq:bernoulli-property}
\lim_{\varepsilon\rightarrow 0^+} B_\varepsilon(s) =\left\{\begin{aligned}
	&-s &s\le 0,\\
	&0 &s\ge 0.
\end{aligned}\right.
\end{equation}
In other words, in the limiting case, the non-zero value of the stencil in \eqref{eq:stencil-SAFE} depends solely on the sign of each velocity component, specifically the quadrant in which the velocity field resides. Hence, by considering the limiting of diffusion coefficient $\varepsilon$ as the positive part of the stencil of $K$, this establishes a splitting pattern for the lexicographic Gauss-Seidel smoother.

For velocities in the $i$-th quadrant (where $i=1,2,3,4$), we denote the aforementioned splitting for $K$ as $K^{i,+}$ and $K^{i,-}:=K-K^{i,+}$, indicating $K=K^{i,+}+K^{i,-}$. For instance, regarding the splitting of the stencil for the velocity field in the first quadrant, as illustrated in Figure \ref{fig:2dupdate}. Here, bold edges indicate the use of new values in the lexicographic Gauss-Seidel smoothing process, as referenced in \eqref{eq:stencil} for the specific form of the stencil. For the stencil decomposition of $A$, we employ that of $K$, thus yielding the lexicographic Gauss-Seidel iteration for the velocity field in the $i$-th quadrant: 
$$
A^{i,+} w^{[\nu+1]} = f - A^{i,-}w^{[\nu]},
$$
where the superscript $[\nu]$ denoting the iteration index.
 
\begin{figure}[!htbp]
\centering
\includegraphics[width=.7\textwidth]{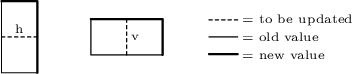}
\caption{Updating order for the (lexicographic) Gauss-Seidel smoother for the convection in the first quadrant ($b_1,b_2>0$).}
\label{fig:2dupdate}
\end{figure}
Figure \ref{fig:2dupdate} illustrates the update order for the splitting $(A^{1,+},A^{1,-})$, whose sweep direction is {\it downwind} considering the convection in the first quadrant. In the conservation form \eqref{eq:curlcd}, where convective terms are within the differential operator, their practical function is to induce convection in the $-\bm{\beta}$ direction, rendering downwind smoother also reasonable. 

In general, the quadrant of velocity field cannot be assumed to be consistent everywhere. It should be noted that the assumption of a constant velocity field is only for determining the corresponding splitting $A^{i,\pm}$. Therefore, in practice, we only need to smooth out the four possible cases of the velocity field in the two-dimensional situation. This results in the downwind smoother proposed in this paper, which can be represented in pseudocode as follows:

\begin{alg}[downwind Gauss-Seidel smoother]\label{alg:downwind}
	
	$\xi \leftarrow S^{\rm dw}(A,\xi,f)$
	
	\{
	
	\text{ for }i=1:4
	
	\qquad $ \xi\leftarrow (A^{i,+})^{-1}(f-A^{i,-}\xi)$
	
	\text{end for}
	
	\}	
\end{alg}

\subsection{Hybrid smoother with kernel correction} \label{subsec:kernel}
It is widely known that the design of solvers for $\bm{H}({\rm curl})$ problems differs fundamentally from scalar problems due to the significant {\it kernel space} of the curl operator. In fact, as indicated in the seminal work by Hiptmair \cite{hiptmair1998multigrid}, for $\bm{H}({\rm curl})$ diffusion problems, restricting the problem to $\mathrm{ker}({\rm curl})$ results in no amplification of high frequencies. This viewpoint remains valid for $\bm{H}({\rm curl})$ convection-diffusion problems, and we will approach this issue from the variational form \eqref{eq:variational} with $\varepsilon$ and $\bm{\beta}$ being constant. By employing the exponentially-fitted identity \eqref{eq:Jcurl-exp}, it becomes evident that
\begin{equation} \label{eq:exactness-J}
\mathcal{J}_{\varepsilon, \bm{\beta}}^{\rm curl} \circ {\mathcal J}_{\varepsilon, \bm{\beta}}^{\rm grad} = \varepsilon^2 E_{\bm{\beta}/\varepsilon}^{-1} {\rm curl} \circ {\rm grad} E_{\bm{\beta}/\varepsilon} = 0.
\end{equation}

Therefore, when certain conditions hold in the domain (c.f. \cite[Theorem 2.9]{girault1986finite}), it can be expressed as follows: $\mathrm{ker}({\rm curl}) = {\rm grad} H_0^1(\Omega)$ and $\mathrm{ker}(\mathcal{J}_{\varepsilon, \bm{\beta}}^{\rm curl}) = {\mathcal J}_{\varepsilon, \bm{\beta}}^{\rm grad} H_0^1(\Omega)$, with these two kernel spaces being isomorphic. Consequently, on the $\mathrm{ker}(\mathcal{\bm{J}}_{\varepsilon,\bm{\beta}}^{\rm curl}) \times \mathrm{ker}({\rm curl})$ space, we have
$$
a(\cdot, \cdot)|_{\mathrm{ker}(\mathcal{J}_{\varepsilon,\bm{\beta}}^{\rm curl}) \times \mathrm{ker}({\rm curl})} \Leftrightarrow 
({\mathcal J}_{\varepsilon, \bm{\beta}}^{\rm grad} \cdot , {\rm grad} \cdot)
$$
with the right-hand side living in $H_0^1(\Omega)$.
An important observation is that the bilinear form concerning the kernel space described above corresponds to the equation of scalar convection-diffusion, with the transfer operators being ${\mathcal J}_{\varepsilon, \bm{\beta}}^{\rm grad}$ and ${\rm grad}^*$ (the dual operator of grad). Consequently, smoothing within the kernel space can be effectively achieved using the proper smoothing operators for scalar convection-diffusion problems.

In the discrete setting, it is necessary to define the corresponding discrete transfer operators between the discrete spaces. For the discrete gradient flux, using the exponentially-fitted identity \eqref{eq:Jcurl-exp}, it can be defined in the same manner as the discrete flux operator in Definition \ref{df:discreteflux}:
\begin{equation}\label{eq:discreteflux-grad}
    \mathcal{J}^{\rm grad}_{\bar{\varepsilon}, \bar{\bm{\beta}},T} v_h := \bar{\varepsilon} (\Pi_T^{\rm curl} E_{\bar{\bm{\beta}}/\bar{\varepsilon}})^{-1} \circ {\rm grad} \circ \Pi_T^{\rm grad} (E_{\bar{\bm{\beta}}/\bar{\varepsilon}} v_h) \quad \forall v_h\in \mathcal{L}_{[1]}^1(T).
\end{equation}
Note that the interpolations $\Pi_T^{\rm curl}$ and $\Pi_T^{\rm grad}$ use only the information at points and edges. Therefore, the constants $\bar{\varepsilon}$ and $\bar{\bm{\beta}}$ in \eqref{eq:discreteflux-grad} are {\it approximated on the edges} (here we slightly abuse notation, as the elementwise constants in $\mathcal{J}^{\rm curl}_{\bar{\varepsilon}, \bar{\bm{\beta}}}$ are still denoted by $\bar{\varepsilon}$ and $\bar{\bm{\beta}}$). Since the discrete gradient flux acts as a transfer operator, it is essential that the global operator $\mathcal{J}^{\rm grad}_{\bar{\varepsilon}, \bar{\bm{\beta}}}$ maps $\mathcal{L}_{[1]}^1(\mathcal{T}_h)$ to $\mathcal{N}_{[0]}(\mathcal{T}_h)$, which also necessitates the edge-based constant approximation. As a result, we have the corresponding global grid operator, denoted as $J^{\rm grad}_{\bar{\varepsilon}, \bar{\bm{\beta}}}: \mathcal{N} \to \mathcal{E}$, with the following stencil representation:
$$
(J^{\rm grad}_{\bar{\varepsilon},\bar{\bm{\beta}}} f)_{\bm{k}} =
\begin{cases}
-B_{\bar{\varepsilon}}(\bar{b}_1)f_{k_1-\frac12,k_2} + B_{\bar{\varepsilon}}(-\bar{b}_1) f_{k_1+\frac12,k_2} & \text{if } \bm{k} \in E_1, \\
-B_{\bar{\varepsilon}}(\bar{b}_2)f_{k_1,k_2-\frac12} + B_{\bar{\varepsilon}}(-\bar{b}_2) f_{k_1,k_2+\frac12} & \text{if } \bm{k} \in E_2, 
\end{cases}\quad \forall f \in \mathcal{N}.
$$
For the grad operator, its corresponding grid operator is denoted as $G = J^{\rm grad}_{1,\bm{0}}: \mathcal{N} \to \mathcal{E}$. Ultimately, we derive the following pseudocode for the hybrid smoother, presented in the form of grid operators.

\begin{alg}[hybrid smoother]\label{alg:hybrid}
	
	$\xi \leftarrow S^{\rm hybrid}(A, \xi, f)$
	
 	\{
	
\quad Downwind Gauss-Seidel sweep $\xi \leftarrow S^{\rm dw}(A,\xi, f)$ \qquad [pre-smoothing]
	
\quad $\rho \leftarrow f -A\xi$

\quad $\rho \leftarrow G^T \rho$, $\psi \leftarrow 0$, $A^{\rm aux} \leftarrow G^T A J^{\rm grad}_{\bar{\varepsilon}, \bar{\bm{\beta}}}$

\quad Gauss-Seidel sweep over node-based auxiliary problem $A^{\rm aux} \psi = \rho$

\quad  $\xi \leftarrow \xi + J^{\rm grad}_{\bar{\varepsilon}, \bar{\bm{\beta}}} \psi $	

\quad Downwind Gauss-Seidel sweep  $\xi \leftarrow S^{\rm dw}(A,\xi,f)$  \qquad [post-smoothing]

\}
\end{alg}

In the hybrid smoother detailed above, it is necessary to address the auxiliary problem through a sweeping. Subsequent analysis (Theorem \ref{thm:auxcharacter}) indicates that this auxiliary problem aligns with the SAFE method for $H({\rm grad})$ convection-diffusion problems when $\varepsilon$ and $\bm{\beta}$ are constants. Specifically, its bilinear form is
\begin{equation} \label{eq:SAFE-grad}
a_h^{\rm grad}(u_h, v_h) := (\mathcal{J}^{\rm grad}_{\varepsilon,\bm{\beta}}u_h,{\rm grad}v_h)_T \quad u_h,v_h\in \mathcal{L}_{[1]}^1\cap H_0^1(\Omega).
\end{equation}
The stencil corresponding to this bilinear form involves the 8 surrounding points of a node and is structured as follows:
\begin{equation}\label{eq:gradcdstencil}
{\tiny
\begin{matrix}
\frac{1}{6}
\end{matrix}
\begin{bmatrix}
	-B_{\varepsilon}(b_1)-B_{\varepsilon}(-b_2) & B_{\varepsilon}(b_1)-4B_{\varepsilon}(-b_2)+B_{\varepsilon}(-b_1) &-B_{\varepsilon}(-b_1) - B_{\varepsilon}(-b_2) \\
	B_{\varepsilon}(b_2)-4B_{\varepsilon}(b_1)+B_{\varepsilon}(-b_2) & 4(B_{\varepsilon}(b_1)+B_{\varepsilon}(-b_1)+B_{\varepsilon}(b_2)+B_{\varepsilon}(-b_2)) & B_{\varepsilon}(b_2)-4B_{\varepsilon}(-b_1)+B_{\varepsilon}(-b_2) \\
	-B_{\varepsilon}(b_1) - B_{\varepsilon}(b_2) & B_{\varepsilon}(b_1)-4B_{\varepsilon}(b_2)+B_{\varepsilon}(-b_1) &-B_{\varepsilon}(-b_1) - B_{\varepsilon}(b_2)
\end{bmatrix}
}
\end{equation}
Similarly, by noting the limiting properties of the Bernoulli function \eqref{eq:bernoulli-property}, we can design a downwind smoother. In the numerical experiments, we apply the following updating order for the convection in the first quadrant ($b_1,b_2>0$):
\begin{figure}[!htbp]
	\centering
	\includegraphics[width=.5\textwidth]{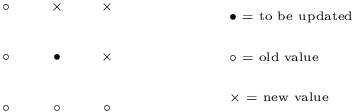}
	\caption{Updating order for the (lexicographic) Gauss-Seidel smoother of auxiliary problem for the convection in the first quadrant ($b_1,b_2>0$).}
	\label{fig:2dnode}
\end{figure}

The rationale behind this design is that the ``new values'' in the upper-right part of the stencil simultaneously include $B_{\varepsilon}(-b_1)$ and $B_{\varepsilon}(-b_2)$, which do not degenerate as $\varepsilon \to 0^+$. Additionally, this selection ensures that the lexicographic Gauss-Seidel smoother completes in a sweeping manner. The updating order of the convection in the other quadrant can be given in a similar way. Following the discussion in Section \ref{subsec:downwind}, in practical implementation, sweeping is still conducted over all four possible quadrants of the velocity field.

\begin{remark}[smoothers for auxiliary problem]
Existing techniques such as the crosswind block smoother \cite{wang1999crosswind}, downwind smoother \cite{bey1997downwind}, and various multigrid methods \cite{kim2003multigrid,kim2004uniformly} can also be utilized effectively for sweeping.
\end{remark}

\subsection{Multigrid method}
On single level, we have designed a hybrid smoother that incorporates convection information and kernel correction. It is well known that efficient solvers can be obtained by designing multilevel algorithms when diffusion dominates, as discussed in \cite{hiptmair1998multigrid}. Therefore, to make the solver widely applicable to various scenarios of convection or diffusion, we adopt the classical geometric multigrid approach to achieve multilevel effectiveness.

Let $\{\mathcal{T}_l\}_{l=0}^L$ be a family of nested structured grid of the computation domain $\Omega$ obtained by successive refinement of any given coarse grid $\mathcal{T}_0$. We denote by $V_l$ the finite element space associated with $\mathcal{N}_{[0]}$ on $\mathcal{T}_l$. It is easy to see that the spaces $\{V_l\}_{l=0}^L$ are nested, i.e., $V_0\subset V_1\subset\cdots\subset V_L$.

We can define  the sesquilinear forms $\{a_l(\cdots,\cdot)\}_{l=0}^L$ on $V_l$ defined as \eqref{eq:SAFE} using the discrete flux operators on $V_l$, which are nonnested since the discrete flux operator is not nested. For $0\le l\le L$, we define the operator $A_l:V_l\rightarrow V_l$ according to
$$
(A_l \bm{w},\bm{v}):= a_l(\bm{w},\bm{v})\quad \forall \bm{w},\bm{v}\in V_l.
$$
For the nested spaces $V_l$, it is straightforward to define the canonical N\'ed\'elec restriction and prolongation operators  $R_l^{l-1}: V_l\rightarrow V_{l-1}$ and $P_{l-1}^l:V_{l-1}\rightarrow V_l$ between $V_{l-1}$ and $V_l$, induced by the natural embedding of these spaces (see \cite{hackbusch1986theorie}).

Using the above operators and the hybrid smoother in Algorithm \ref{alg:hybrid}, we can now outline the multigrid method for solving the SAFE method on level $L$, corresponding to the V(1,1)-cycle multigrid method in the operator form. 
\begin{alg}[Multigrid V(1,1)-cycle]\label{alg:mg}
	
	Initial guess: $u^L\in V_L$, right-hand side $f_L$
	
	$u_l \leftarrow$ MGVC({\rm int} l, $u_l\in V_l$, $f_l$)
	
\{

\quad 	{\rm if }$(l==0)$ $u_0:=A_0^{-1}f_0$

\quad {\rm else}

\qquad \{

\qquad \quad $u_l\leftarrow S^{\rm hybrid}(A_l, u_l, f_l)$ \qquad [pre-smoothing]

\qquad \quad $\eta_{l-1}\leftarrow 0$

\qquad \quad $\eta_{l-1}\leftarrow$MGVC($l-1$,$\eta_{l-1}, R_l^{l-1}(F_l-A_lu_l)$)

\qquad \quad  $u_l\leftarrow u_l+P_{l-1}^l \eta_{l-1}$

\qquad \quad $u_l\leftarrow S^{\rm hybrid}(A_l,u_l,F_l)$ \qquad [post-smoothing]

\qquad \}


\}
\end{alg}

Here, we slightly abuse notation by applying the above operators and functions in $V_l$ instead of using grid operators and grid functions for the hybrid smoother, since they have a one-to-one correspondence (of course, the grid operators and grid functions mentioned here need to satisfy boundary conditions).

\section{Two-dimensional local Fourier analysis}\label{sec:lfa}
In this section, we intend to utilize the local Fourier analysis (LFA) as a tool to investigate the solvers introduced in Section \ref{sec:solver}. For simplicity, we will focus on the SAFE scheme applied to the two-dimensional $\bm{H}({\rm curl})$ convection-diffusion problem. Given the constraints of LFA, we need to assume that the diffusion coefficient $\varepsilon$, convection field $\bm{\beta}$, and reaction coefficient $\gamma$ are all constants. 

The LFA for a node-based method typically relies on the following modes:
\begin{equation} \label{eq:varphi}
	\varphi(\bm{\theta}):=e^{i\bm{\theta}\cdot\bm{x}/h}:=e^{i{\theta_1x_1+\theta2x_2\over h}}, \quad \bm{\theta}\in\mathbb{R}^2.
\end{equation}
Since $\bm{\theta}$ and $\bm{x}$ are intertwined, for simplicity, we omit explicit representation of the dependence on $\bm{x}$ in the expression. As indicated in \eqref{eq:stencil-SAFE}, there exist two distinct stencils for horizontal and vertical edges. Hence, following the notation in \cite{boonen2008local}, we decompose the Fourier mode along the direction, forming a two-dimensional space: 
\begin{equation}\label{eq:2dmode}
F_E(\bm{\theta}) := \mathrm{span}\{\varphi_1(\bm{\theta})_E, \varphi_2(\bm{\theta})_E\},  \quad \varphi_i(\bm{\theta})_E:=\chi_{E_i} \varphi(\bm{\theta})_E,\quad i=1,2,
\end{equation}
where $\chi_{E_i}$ stands for the indicator function. 

Then, the edge-based Fourier mode $\varphi(\bm{\theta})_E$ can be expressed as:
\begin{equation} \label{eq:standard-basis}
	\varphi(\bm{\theta})_E =\Phi(\bm{\theta})\left[\begin{aligned}
		\alpha_1\\\alpha_2
	\end{aligned}\right],\quad \text{ with } \Phi(\bm{\theta}) := [\varphi_1(\bm{\theta})_E\quad \varphi_2(\bm{\theta})_E].
\end{equation}

It is shown in  \cite[Theorem 3.1]{boonen2008local} that $F_E(\bm{\theta})$ remains invariant for any edge operator $L$. The $2\times 2$ representation matrix under $\Phi(\bm{\theta})$ is denoted as $L_{\Phi}(\bm{\theta})$, serving as a fundamental quantity in LFA.


Note that the frequency has a period of $2\pi$. We confine the frequency to the interval $\bm{\Theta} := [-\pi/2, 3\pi/2)^2$. Since smoothers typically aim to diminish high-frequency error components, their effectiveness can be assessed by examining their behavior at high frequencies. We denote the low and high frequency intervals as follows:
$$
\bm{\Theta}^{\rm low}:=[-\pi/2,\pi/2),\quad \bm{\Theta}^{\rm high}:=\bm{\Theta}\backslash \bm{\Theta}^{\rm low}.
$$

\subsection{The splitting in Fourier space} In \eqref{eq:2dmode}, it is observed that $\{\varphi_i(\bm{\theta})_E\}_{i=1,2}$ provides a basis representation for $F_E(\bm{\theta})$. In this section, we will present an alternative basis representation for $F_E(\bm{\theta})$ under which the matrix representation of the edge operator $K$ in \eqref{eq:stencil-SAFE} becomes diagonal. Such a splitting in Fourier space essentially embodies a discrete Helmholtz-type decomposition of $\bm{H}({\rm curl})$ involving the effect of convection.

We denote $J^{\rm grad}_{\varepsilon,\bm{\beta}}: \mathcal{N}\rightarrow\mathcal{E}$ and $C: \mathcal{E}\rightarrow\mathcal{F}$ as the grid operators of the $\mathcal{J}^{\rm grad}_{\varepsilon,\bm{\beta},h}$ and ${\rm curl}$ operators, respectively. They take the following form:
$$
\begin{aligned}
(J^{\rm grad}_{\varepsilon,\bm{\beta}} f)_{\bm{k}} &=
\begin{cases}
-B_{\varepsilon}(b_1)f_{k_1-\frac12,k_2} + B_{\varepsilon}(-b_1) f_{k_1+\frac12,k_2} & \text{if } \bm{k} \in E_1, \\
-B_{\varepsilon}(b_2)f_{k_1,k_2-\frac12} + B_{\varepsilon}(-b_2) f_{k_1,k_2+\frac12} & \text{if } \bm{k} \in E_2, 
\end{cases}\quad \forall f \in \mathcal{N}, \\
(C^T f)_{\bm{k}} &= 
\begin{cases}
-f_{k_1,k_2-\frac12} + f_{k_1,k_2+\frac12} & \text{if } \bm{k} \in E_1, \\
f_{k_1-\frac12,k_2} - f_{k_1+\frac12,k_2+\frac12}& \text{if } \bm{k} \in E_2, 
\end{cases} \quad \forall f \in \mathcal{F},
\end{aligned}
$$
where we recall that $b_i :=\beta_i h$, and $C^T: \mathcal{F} \to \mathcal{E}$ is the transposed grid operator of $C$.

Now for any $\bm{\theta}\in \bm{\Theta}$, we consider $\varphi(\bm{\theta})_N$ and $\varphi(\bm{\theta})_F$ as the interpolation of $\varphi(\bm{\theta})$ in \eqref{eq:varphi} on the node and face index sets, respectively, see \eqref{eq:grid-function}. Then, we define 
\begin{equation} \label{eq:psi}
\psi_J(\bm{\theta})_{E} := 
\begin{cases}
J^{\rm grad}_{\varepsilon,\bm{\beta}} \varphi(\bm{\theta})_N & \text{if } \bm{\theta} \neq (0,0), \\
J^{\rm grad}_{\varepsilon,\bm{\beta}} \tilde{\chi}_N & \text{if } \bm{\theta} = (0,0),
\end{cases} ~
\psi_s(\bm{\theta})_{E} := 
\begin{cases}
C^T \varphi(\bm{\theta})_F & \text{if } \bm{\theta} \neq (0,0), \\
C^T \tilde{\chi}_F & \text{if } \bm{\theta} = (0,0),
\end{cases}
\end{equation}
where the grid functions $\tilde{\chi}_N$ and $\tilde{\chi}_F$ are defined by $\tilde{\chi} :=\varphi(({\pi},\pi))$. 

\begin{lemma}[Fourier-Helmholtz splitting] \label{lm:Helmholtz-splitting}
For all $\bm{\theta}\in \bm{\Theta}$, it holds that 
$$F_E(\bm{\theta}) = \mathrm{span} \{\psi_J(\bm{\theta})_E, \psi_s(\bm{\theta})_E \}.$$
Specifically, if we denote $\Psi(\bm{\theta}):=[\psi_J(\bm{\theta})_E\quad \psi_s(\bm{\theta})_E]$, it has the following transformation relationship with \eqref{eq:standard-basis}:
$$
\Psi(\bm{\theta}) = \Phi(\bm{\theta})H(\bm{\theta}),
$$
where 
\begin{equation} \label{eq:H}
	H(\bm{\theta}):=
	\begin{cases}
	\begin{bmatrix}
		b_1\cos({\theta_1\over 2})+ i \sin ({\theta_1\over 2})[B_{\varepsilon}(b_1)+B_{\varepsilon}(-b_1)]& 2i\sin \left(\frac{\theta_2}{2}\right) \\
		b_2\cos({\theta_2\over 2})+ i \sin ({\theta_2\over 2})[B_{\varepsilon}(b_2)+B_{\varepsilon}(-b_2)] & -2i\sin \left(\frac{\theta_1}{2}\right)
	\end{bmatrix}
	&\text{ if }\bm{\theta}\neq(0,0),\vspace{1mm}\\
	\begin{bmatrix}
		 i [B_{\varepsilon}(b_1)+B_{\varepsilon}(-b_1)]& 2i\\
		i [B_{\varepsilon}(b_2)+B_{\varepsilon}(-b_2)] & -2i\
	\end{bmatrix}
	&\text{ if }\bm{\theta}=(0,0).
\end{cases}
\end{equation}
\end{lemma}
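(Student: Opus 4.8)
The plan is to compute $\psi_J(\bm{\theta})_E$ and $\psi_s(\bm{\theta})_E$ explicitly in terms of the edge-direction modes $\varphi_1(\bm{\theta})_E$ and $\varphi_2(\bm{\theta})_E$, read off the coefficient matrix $H(\bm{\theta})$, and then argue that $H(\bm{\theta})$ is nonsingular so that $\{\psi_J,\psi_s\}$ is indeed a basis of $F_E(\bm{\theta})$. First I would treat the generic case $\bm{\theta}\ne(0,0)$. Applying the stencil of $J^{\rm grad}_{\varepsilon,\bm{\beta}}$ to $\varphi(\bm{\theta})_N$ at a horizontal edge $\bm{k}\in E_1$ gives $-B_\varepsilon(b_1)e^{-i\theta_1/2}\varphi(\bm{\theta})_E(\bm{k}) + B_\varepsilon(-b_1)e^{i\theta_1/2}\varphi(\bm{\theta})_E(\bm{k})$, where the half-integer shifts in the node index produce the phase factors $e^{\pm i\theta_1/2}$ relative to the edge midpoint; using $B_\varepsilon(-s)-B_\varepsilon(s) = s$ (a consequence of \eqref{eq:bernoulli}, since $\int_0^1 e^{sx/\varepsilon}dx$ satisfies the Bernoulli functional equation) and Euler's formula, this collapses to $b_1\cos(\theta_1/2) + i\sin(\theta_1/2)[B_\varepsilon(b_1)+B_\varepsilon(-b_1)]$ times $\varphi_1(\bm{\theta})_E$. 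The vertical-edge component is the analogous expression with subscript $2$, which yields the first column of $H(\bm{\theta})$. Similarly, applying $C^T$ to $\varphi(\bm{\theta})_F$ at $\bm{k}\in E_1$ gives $(-e^{-i\theta_2/2}+e^{i\theta_2/2})\varphi_1(\bm{\theta})_E = 2i\sin(\theta_2/2)\,\varphi_1(\bm{\theta})_E$, and at $\bm{k}\in E_2$ gives $-2i\sin(\theta_1/2)\,\varphi_2(\bm{\theta})_E$, which is the second column of $H(\bm{\theta})$.

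Next I would handle the exceptional frequency $\bm{\theta}=(0,0)$. The point here is that $\varphi((0,0))_N$ is the constant grid function, which lies in $\ker$ of the discrete gradient-type operators up to the subtlety that $J^{\rm grad}_{\varepsilon,\bm{\beta}}$ applied to a constant is \emph{not} zero (it equals $(B_\varepsilon(-b_1)-B_\varepsilon(b_1), \ldots) = (b_1,b_2)$-type constants), whereas $C^T$ applied to a constant face function \emph{is} zero. To get a genuinely two-dimensional span one replaces $\varphi((0,0))$ by the highest-frequency mode $\tilde\chi = \varphi((\pi,\pi))$ in the definitions \eqref{eq:psi}; repeating the stencil computation with $\theta_1=\theta_2=\pi$ gives $\cos(\pi/2)=0$ and $\sin(\pi/2)=1$, so the first column becomes $(i[B_\varepsilon(b_1)+B_\varepsilon(-b_1)],\, i[B_\varepsilon(b_2)+B_\varepsilon(-b_2)])^T$ and the second column becomes $(2i,-2i)^T$, matching the second case of \eqref{eq:H}. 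Care is needed to verify that $\chi_{E_i}\varphi((\pi,\pi))_E$ still reproduces the correct basis vectors $\varphi_i((0,0))_E$ after the half-integer edge shifts — i.e. that the phase bookkeeping is consistent with the convention that $\varphi_i(\bm{\theta})_E$ is evaluated with the frequency $\bm{\theta}=(0,0)$ on the edge set — and this is the one place where the definitions must be tracked meticulously.

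Finally, to conclude $F_E(\bm{\theta}) = \mathrm{span}\{\psi_J,\psi_s\}$, since $F_E(\bm{\theta})$ is two-dimensional and $\Psi(\bm{\theta}) = \Phi(\bm{\theta})H(\bm{\theta})$ with $\Phi(\bm{\theta})$ a basis, it suffices to show $\det H(\bm{\theta})\ne 0$ for every $\bm{\theta}\in\bm{\Theta}$. A direct expansion gives $\det H(\bm{\theta}) = -2i\sin(\theta_1/2)[b_1\cos(\theta_1/2)+i\sin(\theta_1/2)(B_\varepsilon(b_1)+B_\varepsilon(-b_1))] - 2i\sin(\theta_2/2)[b_2\cos(\theta_2/2)+i\sin(\theta_2/2)(B_\varepsilon(b_2)+B_\varepsilon(-b_2))]$; grouping real and imaginary parts, the imaginary part is $-2\sum_j \sin^2(\theta_j/2)(B_\varepsilon(b_j)+B_\varepsilon(-b_j))$, which is strictly negative unless $\sin(\theta_1/2)=\sin(\theta_2/2)=0$ (using $B_\varepsilon>0$), and that degenerate case is exactly $\bm{\theta}=(0,0)$ on $\bm{\Theta}$, which is covered separately and there $\det H = (i[B_\varepsilon(b_1)+B_\varepsilon(-b_1)])(-2i) - (2i)(i[B_\varepsilon(b_2)+B_\varepsilon(-b_2)]) = 2(B_\varepsilon(b_1)+B_\varepsilon(-b_1)) + 2(B_\varepsilon(b_2)+B_\varepsilon(-b_2)) > 0$. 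I expect the main obstacle to be not any single hard estimate but the consistent handling of the half-integer index shifts and the associated phase factors $e^{\pm i\theta_j/2}$ relative to the chosen edge reference points, together with the special-casing at $\bm{\theta}=(0,0)$; everything else is a routine application of the stencils in \eqref{eq:psi} and the Bernoulli identity $B_\varepsilon(-s)-B_\varepsilon(s)=s$.
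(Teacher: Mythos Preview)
Your proposal is correct and follows essentially the same route as the paper's own proof: compute $\psi_J$ and $\psi_s$ via the stencils using the identity $B_\varepsilon(-s)-B_\varepsilon(s)=s$, read off $H(\bm{\theta})$, and then argue nonsingularity from the sign-definite part of $\det H(\bm{\theta})$ via $B_\varepsilon>0$. One small slip: in your determinant expansion the $\sin^2(\theta_j/2)(B_\varepsilon(b_j)+B_\varepsilon(-b_j))$ terms constitute the \emph{real} part of $\det H(\bm{\theta})$ (strictly positive), not the imaginary part --- which is exactly how the paper phrases it --- but the conclusion is unchanged.
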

\begin{proof}
When $\bm{\theta} \neq (0,0)$, using the identity $B_{\varepsilon}(-s) - B_{\varepsilon}(s) = s$ by the definition of Bernoulli function \eqref{eq:bernoulli}, a direct computation from \eqref{eq:psi} yields
$$
	\begin{aligned}
		\psi_J(\bm{\theta})_{\bm{k}} &=\left\{
		\begin{array}{ll}
		-B_\varepsilon(b_1)\varphi(\bm{\theta})_{k_1-\frac{1}{2}, k_2}
		+B_\varepsilon(-b_1)\varphi(\bm{\theta})_{k_1+\frac{1}{2}, k_2} &\text{if } \bm{k} \in E_1,\\
		-B_\varepsilon(b_2)\varphi(\bm{\theta})_{k_1, k_2-\frac{1}{2}}
		+B_\varepsilon(-b_2)\varphi(\bm{\theta})_{k_1, k_2+\frac{1}{2}} 
		  &\text{if }  \bm{k} \in E_2,
		\end{array}
		\right.\\
		&=\left\{\begin{array}{ll}
		(b_1\cos({\theta_1\over 2})+ i \sin ({\theta_1\over 2})[B_\varepsilon(b_1)+B_\varepsilon(-b_1)])\varphi(\bm{\theta})_{\bm{k}} &\text{if } \bm{k} \in E_1,\\
		(b_2\cos({\theta_2\over 2})+ i \sin ({\theta_2\over 2})[B_\varepsilon(b_2)+B_\varepsilon(-b_2)])\varphi(\bm{\theta})_{\bm{k}} &\text{if } \bm{k} \in E_2,\\
		\end{array}\right.
			\end{aligned}
$$
and 
$$
\psi_s(\bm{\theta})_{\bm{k}} =
\begin{cases}
	2 i \sin (\frac{\theta_2}{2}) \varphi(\bm{\theta})_{\bm{k}} & \text { if } \bm{k} \in E_1, \\
	-2 i \sin (\frac{\theta_1}{2}) \varphi(\bm{\theta})_{\bm{k}} & \text { if } \bm{k} \in E_2,
\end{cases}
$$
which leads to \eqref{eq:H}, by easily checking the case in which $\bm{\theta}=(0,0)$. Noting that the Bernoulli function in \eqref{eq:bernoulli} is always positive, it follows that the real part of $\det H(\bm{\theta})$ is always positive. Consequently,  the matrix $H(\bm{\theta})$ is invertible.
\end{proof}

\begin{remark}[discrete Helmholtz-type decomposition]
When the convective field is absent, i.e., $b_1 = b_2 = 0$, we have $B_1(\pm b_i) = 1$. In this case, the above result aligns with the Fourier-Helmholtz splitting presented in \cite[Section 4]{boonen2008local}. In fact, the splitting in Fourier space in \cite{boonen2008local} corresponds to a discrete Helmholtz decomposition in the grid function space $\mathcal{E}$: $\mathcal{E} = \text{ran}(G) \oplus \text{ran}(C^T)$, which is known to be $l^2$-orthogonal. When considering the convective effects, Lemma \ref{lm:Helmholtz-splitting} corresponds to another decomposition $\mathcal{E} = \text{ran}(J^{\text{grad}}_{\varepsilon, \bm{\beta}}) \oplus \text{ran}(C^T)$. As can be easily seen from \eqref{eq:discreteflux-grad}, this decomposition is $l^2$-orthogonal under a discrete exponential weight function.

\end{remark}

Thanks to Lemma \ref{lm:Helmholtz-splitting}, a grid function in $F_E(\bm{\theta})$ represented in terms of the basis $\Psi(\bm{\theta})$ can be converted into a representation with respect to the basis $\Phi(\bm{\theta})$ using the following transformation:
\begin{equation}\label{eq:basistransform}
	\Psi(\bm{\theta})\left[\begin{array}{l}\alpha_J \\\alpha_s
	\end{array}\right]=\Phi(\bm{\theta})\left[\begin{array}{l}\alpha_1\\\alpha_2
	\end{array}\right]\quad\text{ and }\quad 
	H(\bm{\theta})\left[\begin{array}{l}\alpha_J \\\alpha_s
	\end{array}\right]
	=\left[\begin{array}{l}\alpha_1\\\alpha_2
	\end{array}\right].
\end{equation}
The following abbreviations will be used:
$$
\begin{aligned}
s_i = \sin({\theta_i\over 2}), \quad \tilde{s}_i = \sin(\theta_i), \quad c_i = \cos({\theta_i\over 2}), \quad \tilde{c}_i = \cos(\theta_i).
\end{aligned}
$$

In light of the stencil form $K$ in \eqref{eq:stencil-SAFE}, its Fourier representation under the basis $\Phi(\bm{\theta})$ \eqref{eq:standard-basis} has the form:
\begin{equation}\label{eq:Kphi}
 K_{\Phi}(\bm{\theta}) =
{\footnotesize
{1\over h^2}
\begin{bmatrix}
 2s_2^2(B_{\varepsilon}(b_2)+ B_{\varepsilon}(-b_2))-i\tilde{s}_2 b_2 & -2 s_1 s_2(B_{\varepsilon}(b_1)+B_{\varepsilon}(-b_1))+2is_2c_1b_1 \\
		-2 s_1 s_2 (B_{\varepsilon}(b_2)+B_{\varepsilon}(-b_2))+2is_1c_2b_2 & 2 s_1^2(B_{\varepsilon}(b_1)+B_{\varepsilon}(-b_1))-i\tilde{s}_1b_1
\end{bmatrix}.
}
\end{equation}
By applying \eqref{eq:basistransform},  we have
\begin{equation}\label{eq:Kpsi}
	\begin{aligned}
	 K_{\Psi}(\bm{\theta})  &= H(\bm{\theta})^{-1} K_{\Phi}(\bm{\theta}) H(\bm{\theta})\\
		& =
		{\footnotesize
		{1\over h^2}\begin{bmatrix}
			0 & 0 \\
			0 & 2(s_1^2(B_{\varepsilon}(b_1)+B_{\varepsilon}(-b_1))+s_2^2(B_{\varepsilon}(b_2)+B_{\varepsilon}(-b_2))-i(\tilde{s}_1b_1+\tilde{s}_2b_2))
		\end{bmatrix}.
		}
	\end{aligned}
\end{equation}
The structure of $K_\Psi(\bm{\theta})$ allows $\psi_J(\bm{\theta})_E$ and $\psi_s(\bm{\theta})_E$ to be identified as the eigenvectors of the edge operator $K$. It can be seen that although the Fourier representation matrix $K_\Phi(\bm{\theta})$ is non-symmetric, it is diagonalizable based on the Fourier-Helmholtz splitting in Lemma \ref{lm:Helmholtz-splitting}. This enables us to subsequently analyze the specific effects of the smoother on different components.

For the mass component $ M$, it is straightforward to obtain:
\begin{equation} \label{eq:mass-phi}
	 M_\Phi(\bm{\theta})=\frac{1}{3}
	 \begin{bmatrix}
		2+\tilde{c}_2&0\\0&2+\tilde{c}_1
	\end{bmatrix}.
	\end{equation}
Collecting the above results, we can obtain the Fourier representation of $A$ as $A_\Phi(\bm{\theta}) = K_\Phi(\bm{\theta}) +\gamma M_\Phi(\bm{\theta})$.

\subsection{LFA for smoothers}
In this part, we give the LFA for the downwind (lexicographic) Gauss-Seidel smoother and hybrid smoother.

\subsubsection{Downwind Gauss-Seidel smoother}
Taking the splitting $(A^{1,+},A^{1,-})$ as an example, which is effective for convection in the first quadrant. 
The corresponding iteration matrix is given by $S^1:=-(A^{1,+})^{-1}A^{1,-}$. Consequently, the error reduction in $F_E(\bm{\theta})$ can be expressed as:
\begin{equation}
	\left[\begin{array}{c}\alpha_1\\\alpha_2
	\end{array}\right]^{[\nu+1]}\begin{aligned}=
	-(A^{1,+}_\Phi(\bm{\theta}))^{-1}A^{1,-}_\Phi(\bm{\theta})\left[\begin{array}{c}\alpha_1\\\alpha_2
	\end{array}\right]^{[\nu]}
	:=S^1_\Phi(\bm{\theta}) \left[\begin{array}{c}\alpha_1\\\alpha_2
	\end{array}\right]^{[\nu]}.
\end{aligned}
\end{equation}

Similarly, we can obtain the representation of $A^{i,+}_\Phi(\bm{\theta})$, where $i=2,3,4$, to derive $S^{i}_\Phi(\bm{\theta})$. 
Noting that the complete downwind smoother in Algorithm \ref{alg:downwind} combines these four smoothers together, allowing the error equation to be analyzed using the spectral radius of the downwind smoother $S^{\rm dw}_\Phi(\bm{\theta}):=S^4_\Phi(\bm{\theta})S^3_\Phi(\bm{\theta})S^2_\Phi(\bm{\theta})S^1_\Phi(\bm{\theta})$, i.e.,
$$
\rho^{\rm dw}(\bm{\theta}):=\rho(S^{\rm dw}_\Phi(\bm{\theta}))=\max\{|\lambda|:\lambda\in \sigma(S^{\rm dw}_\Phi(\bm{\theta}))\}.
$$
Using the relationship \eqref{eq:basistransform}, we can also express this representation under the basis $\Psi(\bm{\theta})$ as $S^{\rm dw}_\Psi(\bm{\theta}):=H^{-1}(\bm{\theta}) S^{\rm dw}_\Phi(\bm{\theta})H(\bm{\theta})$. 

To investigate the properties of downwind smoother, we set $\bm{\beta}=(\frac{1}{2},{\sqrt{3}\over 2})^T$, $\gamma=1$ and $h=1/32$, and present the  contour of $\rho^{\rm dw}(\bm{\theta})$ for different $\varepsilon$. 
The bold line separates the low frequency region $\bm{\Theta}^{\rm low}$ in the lower left corner from the high frequency region $\bm{\Theta}^{\rm high}$.  

As shown in Figure \ref{fig:smoother-h32}, in diffusion-dominated cases ($\varepsilon=1$), it is obvious that regardless of the high-frequency or low-frequency regions, the corresponding smoothing factor is very close to $1$, indicating the ineffectiveness of a single downwind smoother. This finding is similar to the result for Maxwell's equations \cite{hiptmair1998multigrid,boonen2008local}. As diffusion diminishes and convection gradually dominates, the spectral radius in the high-frequency region decreases. Meanwhile, it can be observed that its performance in the low-frequency region is relatively poor in the convection-dominated case $(\varepsilon=10^{-4})$.

To delve deeper into the behavior of the smoother, we examine the values of the upper left $S_{\Psi}^{\rm dw}(\bm{\theta})_{1,1}$ and lower left $S_{\Psi}^{\rm dw}(\bm{\theta})_{2,1}$ corners of the representation matrix $S_{\Psi}^{\rm dw}(\bm{\theta})$ under the basis $\Psi(\bm{\theta})$, as shown in Figure \ref{fig:smoother-left}.  It can be observed that for both $\varepsilon=1$ and $\varepsilon=10^{-4}$, the values in the lower left corner are close to zero and much smaller compared to those in the upper left corner. This suggests that the basis function $\psi_J(\bm{\theta})$ is a near-eigenfunction of $S_{\Psi}^{\rm dw}(\bm{\theta})$. Moreover, upon comparison with the results in Figures \ref{fig:smoother-h32}(c) and \ref{fig:smoother-left}(c), the values in the upper left corner are close to the spectral radius of $S_{\Psi}^{\rm dw}(\bm{\theta})$ when $\varepsilon = 10^{-4}$. Therefore, it can be inferred that the convergence behavior of the smoother primarily relies on the error elimination on the kernel $\psi_J(\bm{\theta})$ for the convection-dominated case. 
Therefore, whether diffusion or advection dominates, there is an urgent demand for a hybrid smoother capable of efficiently tackling this phenomenon by effectively mitigating errors in the kernel component, as will be demonstrated later.

\begin{figure}[!htbp]
	\centering
	\subfloat[$\varepsilon=1$]{
		\includegraphics[width=.32\textwidth]{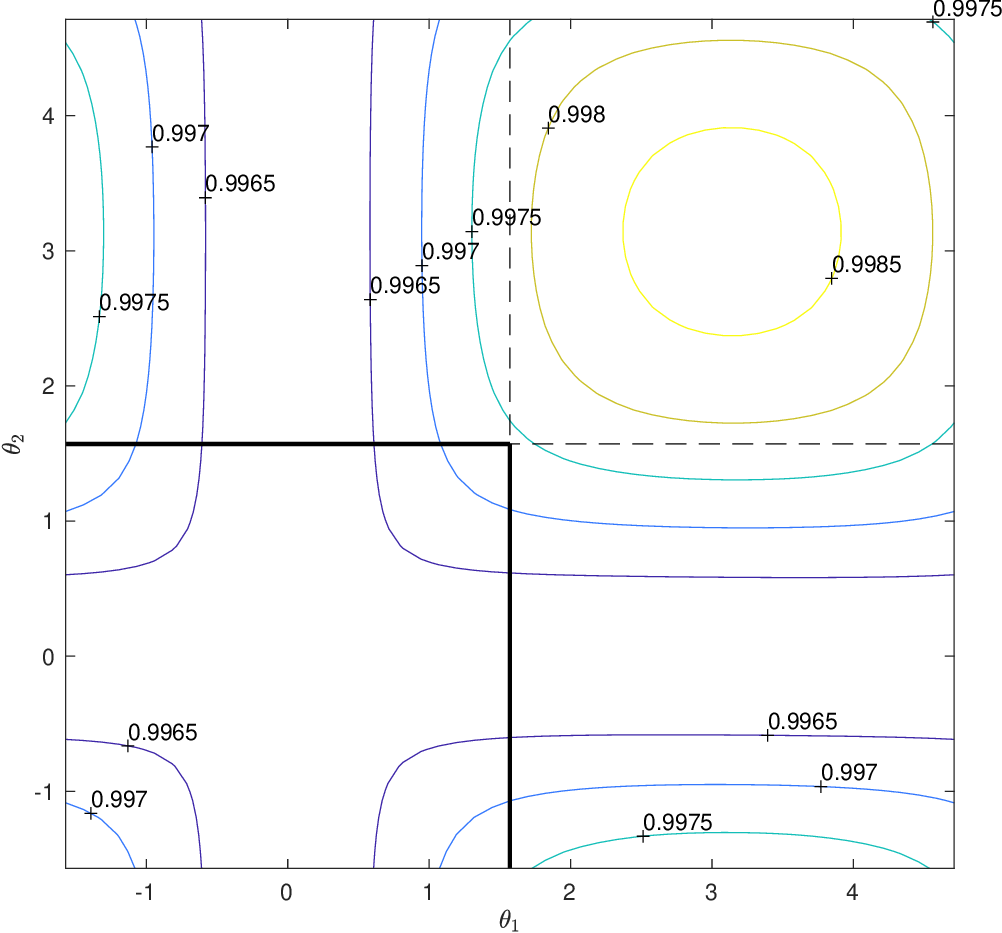}
	}
	\subfloat[$\varepsilon=10^{-2}$]{
		\includegraphics[width=.31\textwidth]{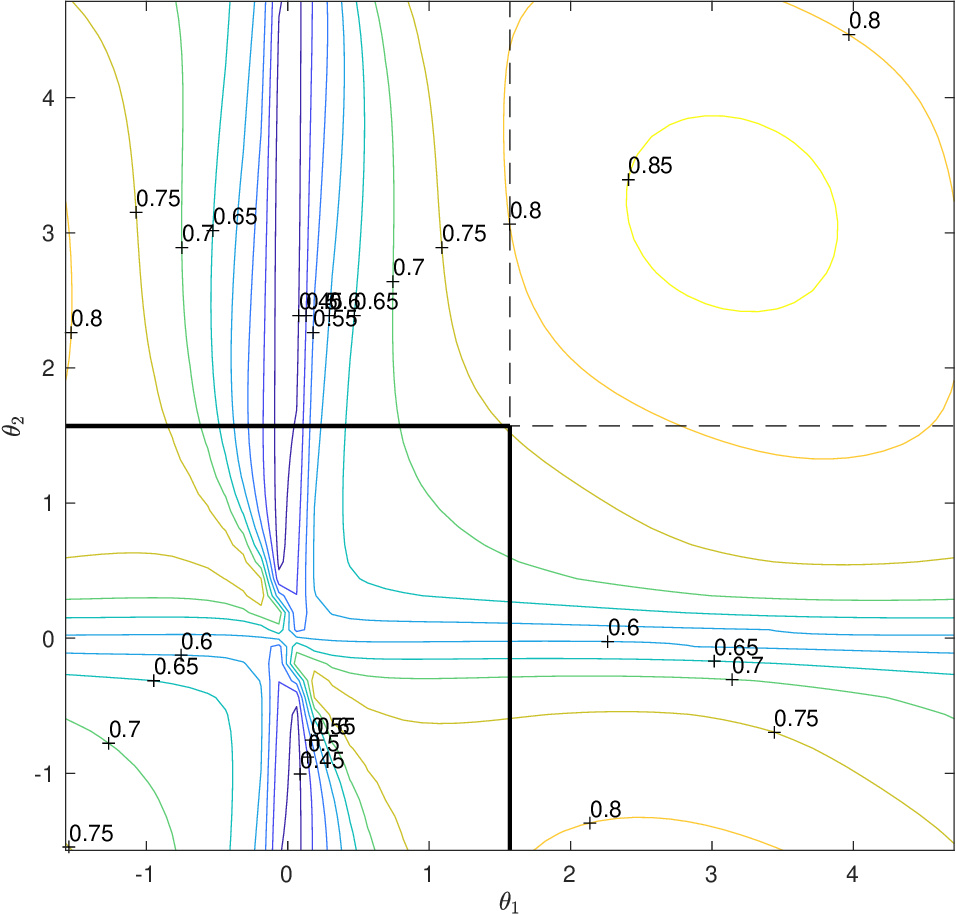}
	}
	\subfloat[$\varepsilon=10^{-4}$]{
	\includegraphics[width=.31\textwidth]{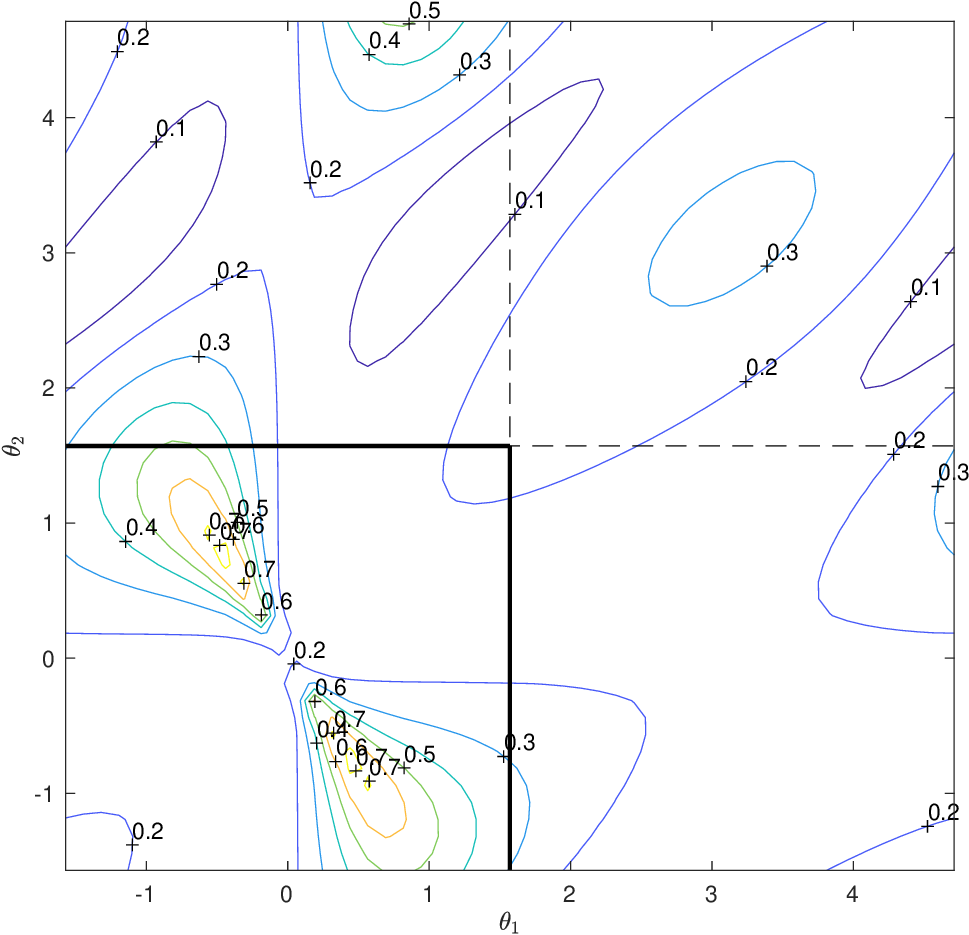}}  
	\\
	\subfloat[$\varepsilon=1$]{
		\includegraphics[width=.31\textwidth]{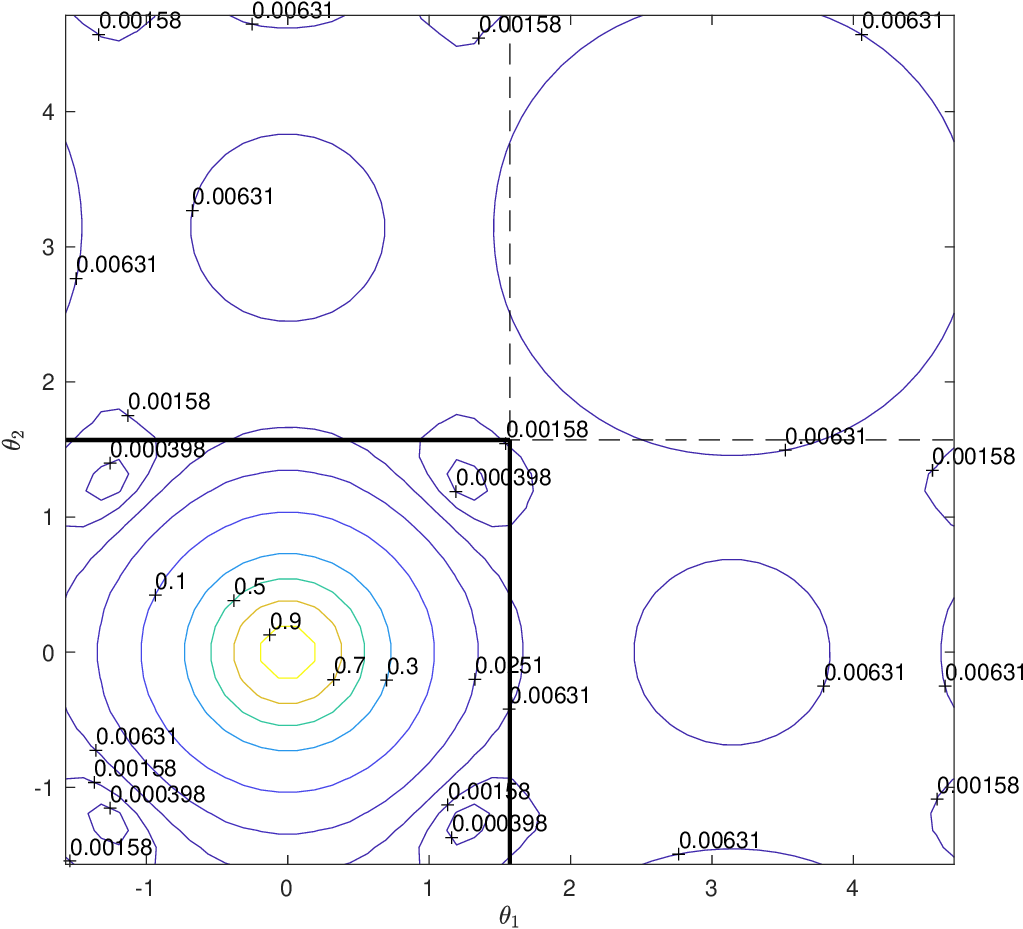}
	}
	\subfloat[$\varepsilon=10^{-2}$]{
		\includegraphics[width=.31\textwidth]{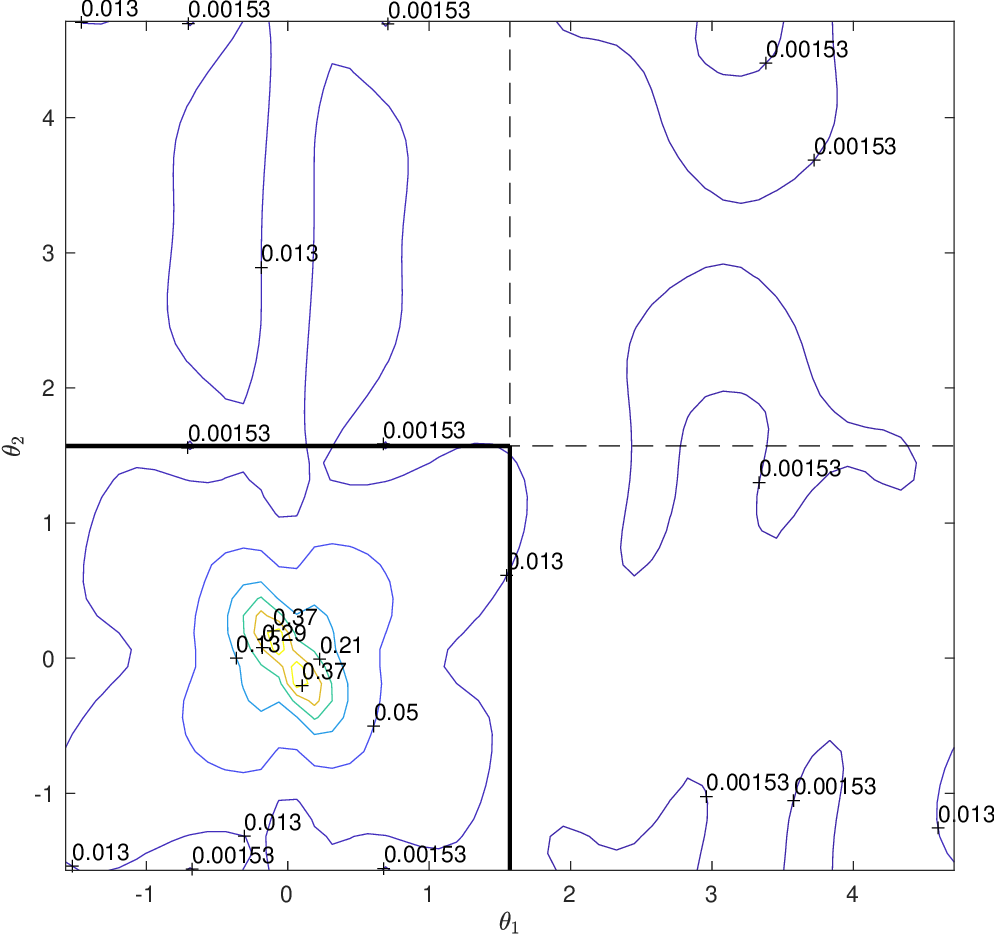}
	}
	\subfloat[$\varepsilon=10^{-4}$]{
		\includegraphics[width=.31\textwidth]{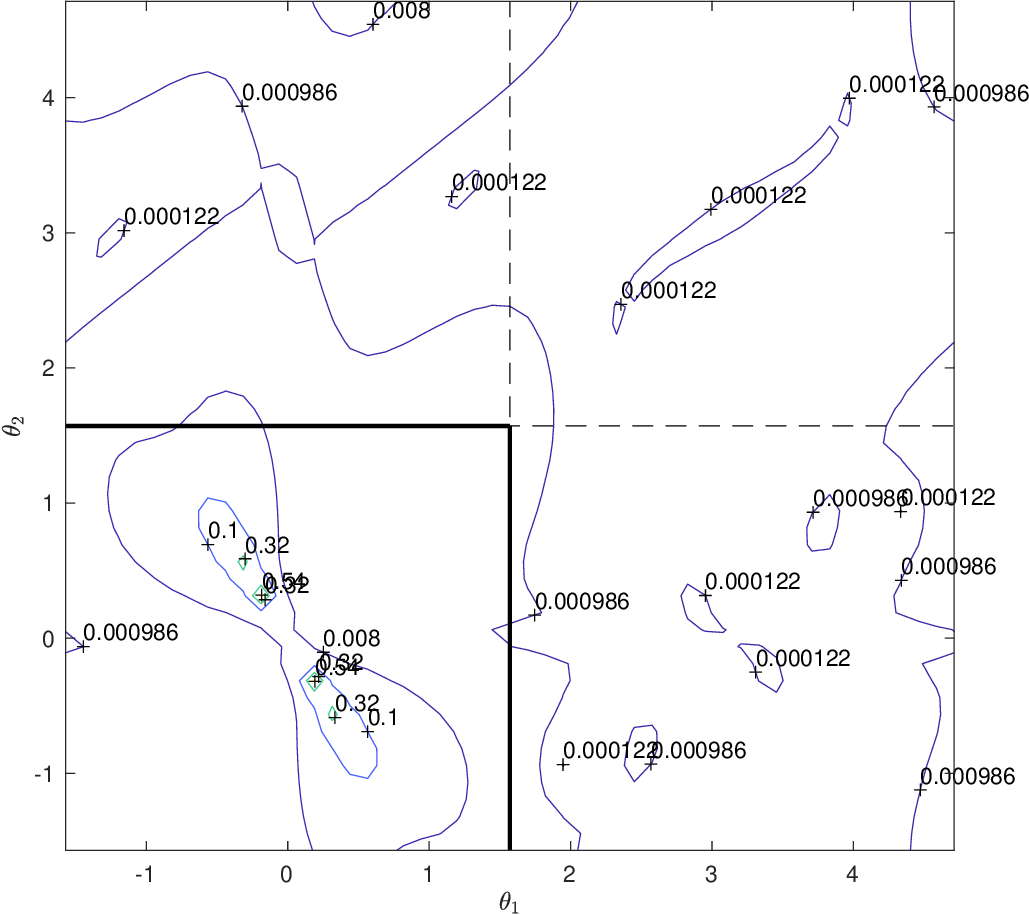}
	}
	\caption{Contour plots of $\rho^{\rm dw}$ (top) and $\rho^{\rm hybrid}$ (bottom) as a function of $\bm{\theta}$ with $h=1/32$, $\bm{\beta}=({1\over 2},{\sqrt{3}\over 2})^T$ and various $\varepsilon=1,10^{-2},10^{-4}$.}
	\label{fig:smoother-h32}
\end{figure}

\begin{figure}[!htbp]
	\centering
		\subfloat[$\varepsilon=1, S_{\Psi}^{\rm dw}(\bm{\theta})_{1,1}$ ]{
			\includegraphics[width=.235\textwidth]{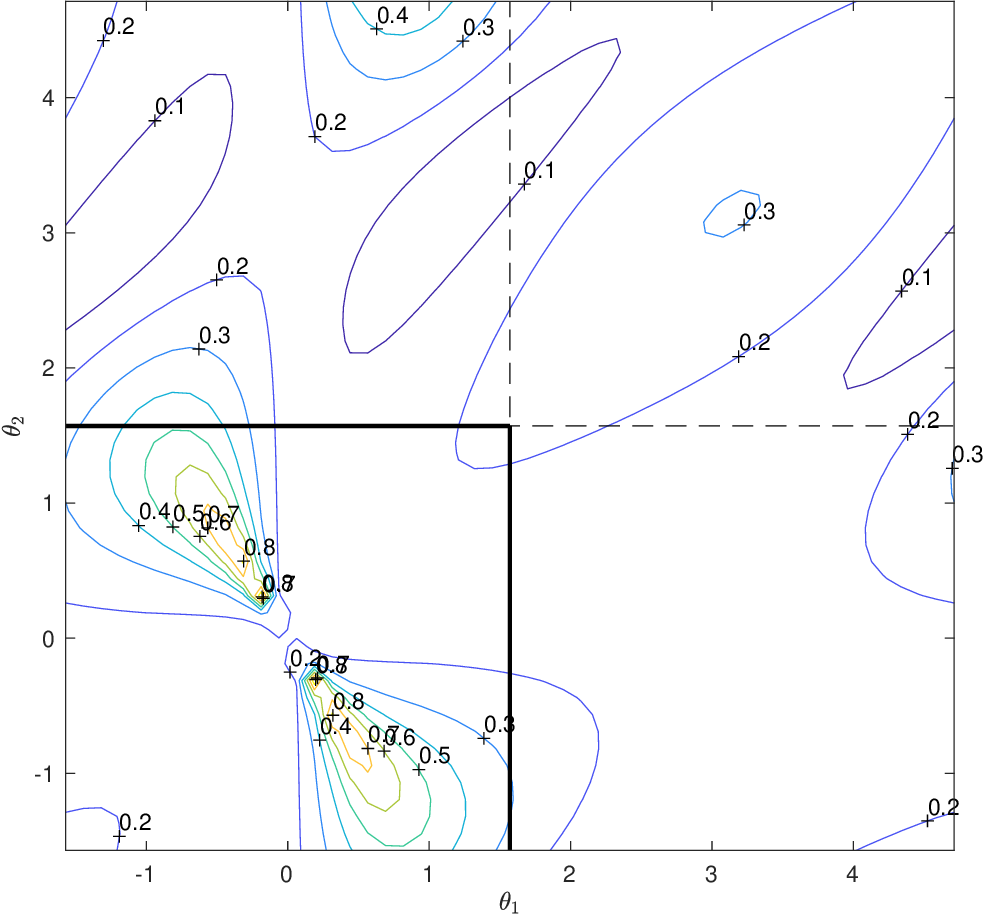}
		}
		\subfloat[$\varepsilon=1, S_{\Psi}^{\rm dw}(\bm{\theta})_{2,1}$ ]{
			\includegraphics[width=.23\textwidth]{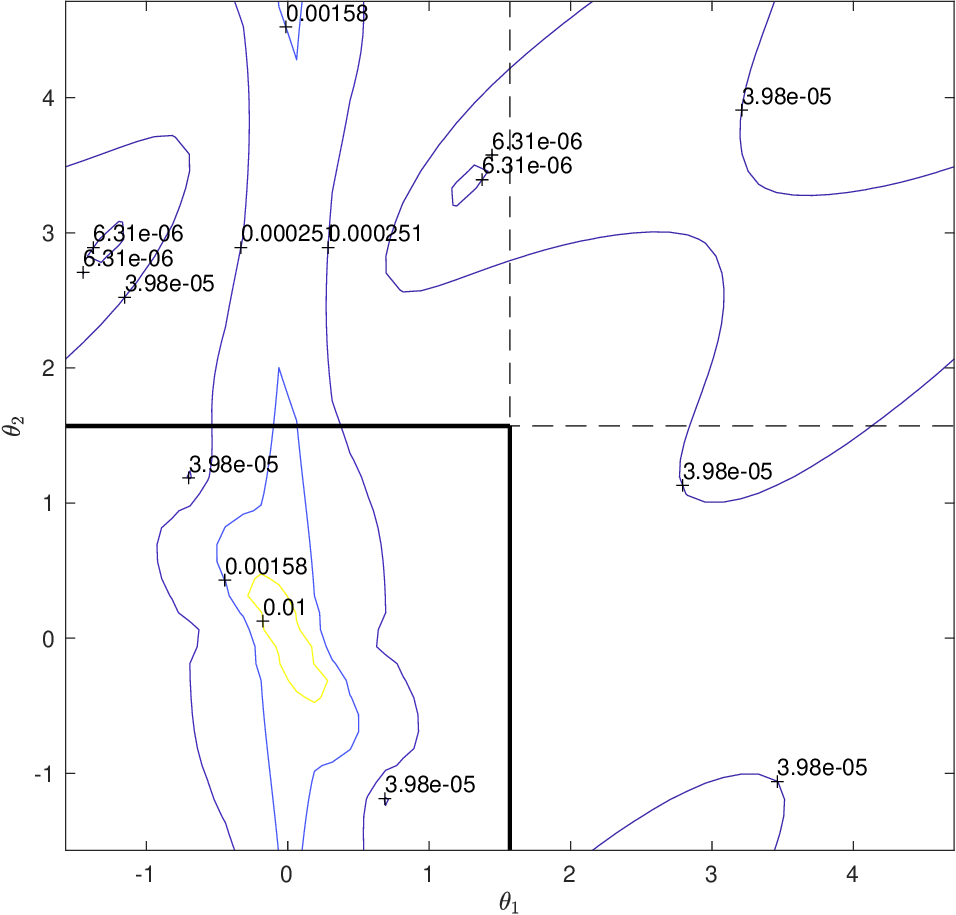}
		}
	\subfloat[$\varepsilon=10^{-4}, S_{\Psi}^{\rm dw}(\bm{\theta})_{1,1}$ ]{
		\includegraphics[width=.235\textwidth]{fig/smoother11_eps_0.0001_beta_30_h_32.eps}
	} 		
	\subfloat[$\varepsilon=10^{-4}, S_{\Psi}^{\rm dw}(\bm{\theta})_{2,1}$ ]{
		\includegraphics[width=.23\textwidth]{fig/smoother21_eps_0.0001_beta_30_h_32.eps}
	}
	\caption{Contour plots of $S_{\Psi}^{\rm dw}(\bm{\theta})_{1,1}$ and $S_{\Psi}^{\rm dw}(\bm{\theta})_{2,1}$ as a function of $\bm{\theta}$ with $h=1/32$, $\bm{\beta}=({1\over 2},{\sqrt{3}\over 2})^T$ and  various $\varepsilon=1$ (left) and $\varepsilon=10^{-4}$ (right).}
	\label{fig:smoother-left}
\end{figure}

\subsubsection{Hybrid smoother}
Firstly, according to the results in \cite[Equ. (6.4)]{boonen2008local}, the Fourier representation of $G^T: \mathcal{E} \to \mathcal{N}$ under $\Phi(\bm{\theta})$ is given by:
$$
   G_\Phi^T(\bm{\theta}) = -2i[s_1\quad s_2],
$$
where we note that the Fourier mode on the node is a classical one-dimensional space spanned by $\varphi(\bm{\theta})_N$. Using \eqref{eq:basistransform}, we can get its representation under basis $\Psi(\bm{\theta})$:
\begin{equation}\label{eq:Gpsi}
\begin{aligned}
 G_\Psi^T(\bm{\theta})& =G^T_\Phi(\bm{\theta})H(\bm{\theta}) \\
 &=
 {\footnotesize
 \begin{bmatrix}
 2s_1^2(B_\varepsilon(b_1)+B_\varepsilon(-b_1))+2s_2^2(B_\varepsilon(b_2)+B_\varepsilon(-b_2))-i(\tilde{s}_1b_1+\tilde{s}_2b_2) & 0
 \end{bmatrix}.
 }
 \end{aligned}
\end{equation}
Thus, by applying \eqref{eq:Kpsi} and \eqref{eq:Gpsi}, we immediately obtain $G_{\Psi}^T(\bm{\theta}) K_{\Psi}(\bm{\theta}) = 0$. Further, the definition of $\psi_J(\bm{\theta})_E$ in \eqref{eq:psi} indicates that the Fourier representation of $J^{\rm grad}_{\varepsilon, \bm{\beta}}$ satisfies $J^{\rm grad}_{\varepsilon, \bm{\beta}, \Psi}(\bm{\theta}) = [1,0]^T$. Now for the auxiliary problem in Algorithm \ref{alg:hybrid}, where \( A^{\rm aux} := G^T A J_{\varepsilon, \bm{\beta}}^{\rm grad} \), we can derive the following result in its Fourier representation.

\begin{theorem}[Fourier representation of auxiliary problem] \label{thm:auxcharacter}
When $\varepsilon$ and $\bm{\beta}$ are constants, the Fourier representation of the $A^{\rm aux}$ is equivalent to that of the node operator given by the SAFE scheme  \eqref{eq:SAFE-grad} for $H({\rm grad})$ convection-diffusion problem scaled by $\gamma$.
\end{theorem}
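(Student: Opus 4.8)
The plan is to compute the Fourier representation of $A^{\rm aux} = G^T A J^{\rm grad}_{\varepsilon,\bm{\beta}}$ directly, using the basis $\Psi(\bm{\theta})$ where the relevant quantities are already diagonalized, and then to separately compute the Fourier representation of the node operator associated with the bilinear form \eqref{eq:SAFE-grad} and the stencil \eqref{eq:gradcdstencil}, and match the two. First I would recall that under $\Psi(\bm{\theta})$ we have $J^{\rm grad}_{\varepsilon,\bm{\beta},\Psi}(\bm{\theta}) = [1,0]^T$ (from the definition of $\psi_J$ in \eqref{eq:psi}), that $G^T_\Psi(\bm{\theta})$ is the row vector computed in \eqref{eq:Gpsi} whose only nonzero entry is the $(1,1)$-entry, call it $g(\bm{\theta})$, and that $K_\Psi(\bm{\theta})$ and $M_\Phi(\bm{\theta})$ are known from \eqref{eq:Kpsi} and \eqref{eq:mass-phi}. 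Since $A_\Psi(\bm{\theta}) = K_\Psi(\bm{\theta}) + \gamma M_\Psi(\bm{\theta})$ with $M_\Psi = H^{-1} M_\Phi H$, and since $A^{\rm aux}_\Psi(\bm{\theta}) = G^T_\Psi(\bm{\theta}) A_\Psi(\bm{\theta}) J^{\rm grad}_{\varepsilon,\bm{\beta},\Psi}(\bm{\theta})$ is just a scalar, it equals $g(\bm{\theta})$ times the $(1,1)$-entry of $A_\Psi(\bm{\theta})$. Because $K_\Psi(\bm{\theta})$ has vanishing first row and column (the kernel component), the $K$-part contributes nothing, so $A^{\rm aux}_\Psi(\bm{\theta}) = \gamma\, g(\bm{\theta}) \, (M_\Psi(\bm{\theta}))_{1,1}$; this already exhibits the factor $\gamma$ claimed in the statement.

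The next step is to evaluate $(M_\Psi(\bm{\theta}))_{1,1} = (H^{-1}(\bm{\theta}) M_\Phi(\bm{\theta}) H(\bm{\theta}))_{1,1}$ explicitly using \eqref{eq:H} and \eqref{eq:mass-phi}, and to multiply by $g(\bm{\theta})$ from \eqref{eq:Gpsi}. In parallel, I would write down the Fourier symbol of the node operator whose stencil is \eqref{eq:gradcdstencil}: since the node-based Fourier space is the one-dimensional span of $\varphi(\bm{\theta})_N$, this symbol is a scalar obtained by summing the nine stencil entries against the phase factors $e^{i(t_1\theta_1 + t_2\theta_2)}$ over the $3\times 3$ offsets, and then grouping the Bernoulli terms. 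The goal is to show that this scalar symbol, times $\gamma$, coincides with $g(\bm{\theta})\,(M_\Psi(\bm{\theta}))_{1,1}$. I expect both sides, after using the abbreviations $s_i, \tilde s_i, c_i, \tilde c_i$ and the identity $B_\varepsilon(-s) - B_\varepsilon(s) = s$, to reduce to an expression of the shape (diffusion-type combination of $B_\varepsilon(\pm b_i)$) plus (imaginary convection part $-i(\tilde s_1 b_1 + \tilde s_2 b_2)$ up to scaling) plus (a reaction/mass contribution carrying the factors $2 + \tilde c_i$), and the matching is then a bookkeeping exercise; the special case $\bm{\theta} = (0,0)$ is handled by the corresponding branch of \eqref{eq:H} and is checked separately.

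The main obstacle I anticipate is the bookkeeping in computing $(H^{-1} M_\Phi H)_{1,1}$: since $H(\bm{\theta})$ is a genuinely nontrivial $2\times 2$ matrix whose entries mix $\cos$, $\sin$, and Bernoulli values, inverting it and extracting a single entry of the conjugated mass matrix produces a somewhat bulky rational expression in $s_i$, $c_i$, $B_\varepsilon(\pm b_i)$, and one must simplify it — using $s_i^2 + c_i^2 = 1$, $\tilde s_i = 2 s_i c_i$, $\tilde c_i = 1 - 2 s_i^2$, and the Bernoulli identity — down to the clean symbol coming from \eqref{eq:gradcdstencil}. A useful sanity check along the way is the convection-free case $b_1 = b_2 = 0$, where $B_\varepsilon(\pm b_i) = \varepsilon$ collapses everything to the known gradient-Laplacian-plus-mass symbol, matching the $\mathbf{\beta}$-free SAFE scheme for $-\varepsilon\Delta + \gamma$ (consistent with \cite{boonen2008local}); if the general computation reproduces this in the limit and also reproduces the correct imaginary convective part and the $(2+\tilde c_i)/3$ mass factors from \eqref{eq:mass-phi}, the identification is complete. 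Other than this algebra, no conceptual difficulty is expected, since the key structural facts — $K_\Psi$ block-diagonal with zero kernel block, $J^{\rm grad}_{\Psi} = [1,0]^T$, $G^T_\Psi$ supported on the kernel component — have already been established in Lemma \ref{lm:Helmholtz-splitting} and the surrounding computations.
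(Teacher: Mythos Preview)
Your proposal is correct and follows essentially the same approach as the paper: work in the $\Psi$-basis so that $G^T_\Psi K_\Psi = 0$ kills the stiffness contribution, reduce $A^{\rm aux}_\Psi$ to the mass term alone (exhibiting the factor $\gamma$), and then match the resulting scalar against the Fourier symbol of the nodal stencil \eqref{eq:gradcdstencil}. The one simplification the paper exploits that you miss is that the $H^{-1}$ bookkeeping you flag as the main obstacle is unnecessary, since $G^T_\Psi M_\Psi J^{\rm grad}_\Psi = (G^T_\Phi H)(H^{-1} M_\Phi H)(H^{-1} J^{\rm grad}_\Phi) = G^T_\Phi M_\Phi J^{\rm grad}_\Phi$ can be evaluated directly from the known $\Phi$-representations without ever inverting $H$.
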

\begin{proof}
Noting that $A = K + \gamma M$ and $G_{\Psi}^T(\bm{\theta}) K_{\Psi}(\bm{\theta}) = 0$, we have the result following under the basis $\Psi(\bm{\theta})$:
$$
	\begin{aligned}
	&\quad  \gamma^{-1} A^{\rm aux}_{ \Psi}(\bm{\theta}) \\ 
	&= \gamma^{-1} G^T_\Psi(\bm{\theta})A_\Psi(\bm{\theta})J^{\rm grad}_{\varepsilon, \bm{\beta},\Psi}(\bm{\theta})\\
		&= \gamma^{-1} G^T_\Psi(\bm{\theta})(K_\Psi(\bm{\theta})+\gamma M_\Psi(\bm{\theta}))
		J^{\rm grad}_{\varepsilon, \bm{\beta},\Psi}(\bm{\theta})  \\
		& =  G^T_\Psi(\bm{\theta}) M_\Psi(\bm{\theta})J^{\rm grad}_\Psi(\bm{\theta})  \\
		& = 
		{\scriptsize \begin{matrix}
		{2 \over 3}\big[(2+\tilde{c}_2)s_1^2(B_\varepsilon(b_1)+B_\varepsilon(-b_1))+(2+\tilde{c}_1)s_2^2(B_\varepsilon(b_2)+B_\varepsilon(-b_2)) -\frac{i}{2}\tilde{s}_1b_1(2+\tilde{c}_2)-\frac{i}{2}\tilde{s}_2b_2(2+\tilde{c}_1)\big].
		\end{matrix}
		}
	\end{aligned}
$$
Here, we use the Fourier representation matrix of $M$ in \eqref{eq:mass-phi}.
Through a direct computation, we can confirm that above result precisely mirrors the Fourier representation of the node-based stencil \eqref{eq:gradcdstencil}. This completes the proof. 
\end{proof}

Now suppose the smoother applied to the auxiliary problem in Algorithm \ref{alg:hybrid} is a linear operator $S^{\rm aux}$ with  Fourier representation $S^{\rm aux}(\bm{\theta})$. Then, without considering the pre-smoothing and post-smoothing steps in the hybrid smoother, regarding the kernel correction procedure in Algorithm \ref{alg:hybrid}, the iteration matrix is
$$
S^{\rm kc} := I-J^{\rm grad}_{\varepsilon,\bm{\beta}}(I-S^{\rm aux})(A^{\rm aux})^{-1}G^TA.
$$
Using the expression of $A_{\Psi}^{\rm aux}(\bm{\theta})$ in the proof of above Theorem, Fourier representation of $S^{\rm kc}$ under the basis $\Psi(\bm{\theta})$ turns out to be 
\begin{equation} \label{eq:S-kc}
	\begin{aligned}
 & ~~S^{\rm kc}_{\Psi}(\bm{\theta}) :=
I-J^{\rm grad}_{\Psi}(\bm{\theta})(1-S^{\rm aux}(\bm{\theta}))(A^{\rm aux}_\Psi(\bm{\theta})^{-1})G_{\Psi}^T(\bm{\theta})A_\Psi(\bm{\theta})\\
=&{\scriptsize
\begin{bmatrix}
		S^{\rm aux}(\bm{\theta}) & {2(S^{\rm aux}(\bm{\theta})-1)s_1s_2(\tilde{c}_2-\tilde{c}_1)\over (2+\tilde{c}_2)s_1^2(B_{\varepsilon}(b_1)+B_{\varepsilon}(-b_1))+(2+\tilde{c}_1)s_2^2(B_{\varepsilon}(b_2)+B_{\varepsilon}(-b_2))
			-\frac{i}{2}\tilde{s}_1b_1(2+\tilde{c}_2)-\frac{i}{2}\tilde{s}_2b_2(2+\tilde{c}_1)}\\
		0 & 1
\end{bmatrix}.
}
\end{aligned}
\end{equation}
This formulation indicates that the nodal part of this smoother leaves solenoidal errors unaffected (lower right entry). If there are no such errors present, the error in the kernel of $J^{\rm curl}_{\varepsilon, \bm{\beta}}$ is reduced similarly to the error of the auxiliary problem (upper left entry). The upper right entry represents the aliasing contribution of solenoidal errors.

Combined with the downwind GS smoother, we obtain the Fourier representation of hybrid smoother with one downwind pre-smooth and one downwind post-smooth:
\begin{equation} \label{eq:S-hybrid}
	S^{\rm hybrid}_\Psi(\bm{\theta}):= S^{\rm dw}_\Psi(\bm{\theta}) S^{\rm kc}_{\Psi}(\bm{\theta})  S^{\rm dw}_\Psi(\bm{\theta}) .
\end{equation}
The quantity analysis for the smoother is relied on the spectral radius of $S^{\rm hybrid}_\Psi (\bm{\theta})$:
$$
\rho^{\rm hybrid}(\bm{\theta}):=\rho(S^{\rm hybrid}_\Psi(\bm{\theta}))=\max\{|\lambda|:\lambda\in \sigma(S^{\rm hybrid}_\Psi(\bm{\theta}))\}.
$$

To illustrate the efficiency of the hybrid smoother, we utilize a node-based downwind lexicographic Gauss-Seidel discussed  in Section \ref{subsec:kernel}. The results of LFA are depicted in the bottom row of Figure \ref{fig:smoother-h32}.

In diffusion-dominated cases, similar to the hybrid smoother for Maxwell's equations \cite{hiptmair1998multigrid, boonen2008local}, the behavior of the smoother in high-frequency regions is characterized by a small spectral radius, resulting in effective smoothing. Conversely, in low-frequency regions, the spectral radius is close to 1, indicating ineffective smoothing for low-frequency errors.

In convection-dominated cases, the spectral radius of the smoother is close to zero, around $10^{-3}$, in high-frequency regions, indicating good performance. The spectral radius in low-frequency regions in convection-dominated cases is also relatively smaller. This is because the downwind Gauss-Seidel smoother we employ becomes an exact solver for the operator $K$ when diffusion completely degenerates. Considering the overall situation, where the spectral radius in the low-frequency region is still relatively large, it is necessary to employ multigrid methods to effectively reduce low-frequency errors.

\subsection{LFA for two-level method}
Regarding the LFA for two-level method ($L=1$ in Algorithm \ref{alg:mg}), it is relatively standard compared to \cite[Section 7-8]{boonen2008local}. The main difference between Algorithm \ref{alg:mg} and \cite{boonen2008local} is the use of the SAFE scheme for coarse-grid discretization instead of the Galerkin product. The reasons for this are primarily twofold: (i) the bilinear form in the SAFE scheme is not nested due to the nonlinear Bernoulli function; (ii) the downwind smoother heavily depends on the form of the bilinear shape in the current grid. Therefore, we will only outline the main process of the LFA, highlighting the differences from \cite{boonen2008local}.

To investigate the elimination of error in low frequencies, 
it is necessary to introduce the aliasing of low frequency $\bm{\theta} = (\theta_1, \theta_2) \in \bm{\Theta}^{\rm low}$:
$$
    \bm{\theta}^{(a,b)} := (\theta_1 + a\pi, \theta_2 + b\pi) \quad a, b \in \{0,1\}.
$$
This is because the corresponding Fourier modes have the same expression on a grid scale of $2h$. Additionally, the space $F_E(\bm{\theta})$ is not invariant under the two-grid operator, but it is invariant in the eight-dimensional space of $2h$-harmonics as follows:
\begin{equation} \label{eq:2h-harmonics}
    F_8(\bm{\theta}) := \bigcup_{a, b \in \{0,1\}} F_E(\bm{\theta}^{(a, b)}) \quad \bm{\theta} \in \bm{\Theta}^{\text{low}}.
\end{equation}

We recall the classical iteration operator  of two grid method:
\begin{equation}\label{eq:2mgiter}
	C := S(I-PA_c^{-1}RA)S,
\end{equation}
where $S$ is the hybrid smoother proposed in Section \ref{subsec:kernel}, $A:= K + \gamma M$ is the discrete operator on the fine grid. Their Fourier representations on $F_8(\bm{\theta})$ are
\begin{equation} \label{eq:Fourier-SA}
\begin{aligned}
\mathcal{S}(\bm{\theta}) &=
{\scriptsize
\begin{bmatrix}
	S^{\rm hybrid}_\Phi(\bm{\theta}^{(0,0)})&&&\\&S^{\rm hybrid}_\Phi(\bm{\theta}^{(1,0)})&&\\
	&&S^{\rm hybrid}_\Phi(\bm{\theta}^{(0,1)})&\\&&&S^{\rm hybrid}_\Phi(\bm{\theta}^{(1,1)})
\end{bmatrix} 
}
\in \mathbb{C}^{8\times 8}, \\
\mathcal{A}(\bm{\theta}) &= 
{\scriptsize
\begin{bmatrix}
	A_\Phi(\bm{\theta}^{(0,0)})&&&\\&A_\Phi(\bm{\theta}^{(1,0)})&&\\
	&&A_\Phi(\bm{\theta}^{(0,1)})&\\&&&A_\Phi(\bm{\theta}^{(1,1)})
\end{bmatrix} 
}
\in \mathbb{C}^{8\times 8}.
\end{aligned}
\end{equation}
Here, $S^{\rm hybrid}_\Phi(\bm{\theta})$ can be obtained through the transformation on $S^{\rm hybrid}_\Psi(\bm{\theta})$ in \eqref{eq:S-hybrid}, i.e., $S^{\rm hybrid}_\Phi(\bm{\theta}) = H(\bm{\theta})S^{\rm hybrid}_\Psi(\bm{\theta})H^{-1}(\bm{\theta})$, where $H(\bm{\theta})$ is given in \eqref{eq:H}. We also recall the Fourier representation $A_{\Phi}(\bm{\theta}) = K_{\Phi}(\bm{\theta}) + \gamma M_{\Phi}(\bm{\theta})$, see \eqref{eq:Kphi} and \eqref{eq:mass-phi}.

In \eqref{eq:2mgiter}, the restriction operator $R$ and the prolongation operator $P$ are canonical. Their Fourier representations in $F_8(\bm{\theta})$ are provided in \cite[Section 7.2-7.3]{boonen2008local}. Here, we list their expressions:
\begin{equation} \label{eq:Fourier-RP}
\begin{aligned}
\mathcal{R}(\bm{\theta}) &= 
\begin{bmatrix}
R_{\Phi}(\bm{\theta}^{(0,0)}) & R_{\Phi}(\bm{\theta}^{(1,0)}) & R_{\Phi}(\bm{\theta}^{(0,1)}) & R_{\Phi}(\bm{\theta}^{(1,1)})
\end{bmatrix} 
\in\mathbb{C}^{2\times 8}, \\
\mathcal{P}(\bm{\theta}) &= 
\begin{bmatrix}
	P_{\Phi}(\bm{\theta}^{(0,0)})^T & P_{\Phi}(\bm{\theta}^{(1,0)})^T & P_{\Phi}(\bm{\theta}^{(0,1)})^T & P_{\Phi}(\bm{\theta}^{(1,1)})^T
\end{bmatrix}^T \in \mathbb{C}^{8\times 2}.
\end{aligned}
\end{equation}
Here, with the shorthand $s_{i,a} := \sin({\theta_i+a\pi\over 2})$, $\tilde{s}_{i,a} := \sin(\theta_i+a\pi)$, $c_{i,a} := \cos({\theta_i+a\pi\over 2})$ and $\tilde{c}_{i,a} := \cos(\theta_i+a\pi)$,
$$
	R_\Phi(\bm{\theta}^{(a,b)}) = 4P_\Phi(\bm{\theta}^{(a,b)})=
	\begin{bmatrix}
		2c_{1,a}^2c_{2,b}^2(-1)^a & 0\\0&2c_{2,b}^2c_{1,a}^2(-1)^b
	\end{bmatrix} \quad a,b \in \{0,1\}.
$$

The operator $A_c$ on the coarse grid is directly obtained using the SAFE scheme on $\mathcal{T}_{2h}$.  Its corresponding Fourier representation is
\begin{equation} \label{eq:Fourier-Ac}
\mathcal{A}_c(\bm{\theta}^{(0,0)}):=\tilde{{A}}_\Phi(2\bm{\theta}^{(0,0)})\in \mathbb{C}^{2\times 2},
\end{equation}
where $\tilde{{A}}_\Phi(\bm{\theta})$ is defined by replacing $h$ in $A_\Phi(\bm{\theta})$ with $2h$
(Note that the argument $b_i = h\beta_i$ of the Bernoulli function depends on the grid scale $h$).

Combining \eqref{eq:Fourier-SA}, \eqref{eq:Fourier-RP}, and \eqref{eq:Fourier-Ac}, we obtain the Fourier representation of the two-grid cycle in \eqref{eq:2mgiter}:
\begin{equation}\label{eq:Fourier-2mg}
	\mathcal{C}(\bm{\theta}):=\mathcal{S}(\bm{\theta})(I-\mathcal{P}(\bm{\theta})\mathcal{A}^{-1}_c(\bm{\theta})\mathcal{R}(\bm{\theta})\mathcal{A}(\bm{\theta})) \mathcal{S}(\bm{\theta}) \in \mathbb{C}^{8\times 8}.
\end{equation}
Then the asymptotic convergence factor can be calculated as
\begin{equation}
\rho^{\rm mg} := \sup \{\rho(\mathcal{C}(\bm{\theta})):\bm{\theta}\in \bm{\Theta}^{\rm low}\}.
\end{equation}

Figure \ref{fig:2grid} shows the asymptotic convergence factors $\log(\rho^{\rm mg})$ of the two-level method as a function of $\varepsilon$, given different values of $h$ and $\bm{\beta}$. As depicted in the figure, for different directions of convection $\bm{\beta}$, the variation in the convergence factor is similar. It can be observed that with a decrease in $h$, the convergence factor increases but remains consistently bounded. We can see that when $\varepsilon > 0.01$, this factor is close to 0. As $\varepsilon$ decreases, the factor increases but flattens when $\varepsilon$ approaches zero. Overall, regardless of the values of $\varepsilon$ and $h$, the convergence factor is bounded above by 0.3, indicating that the multigrid method exhibits good convergence.

\begin{figure}[!htbp]
	\centering
		\subfloat[$\bm{\beta}=(-{1\over 2},{\sqrt{3}\over 2})$]{
		\includegraphics[width=.32\textwidth]{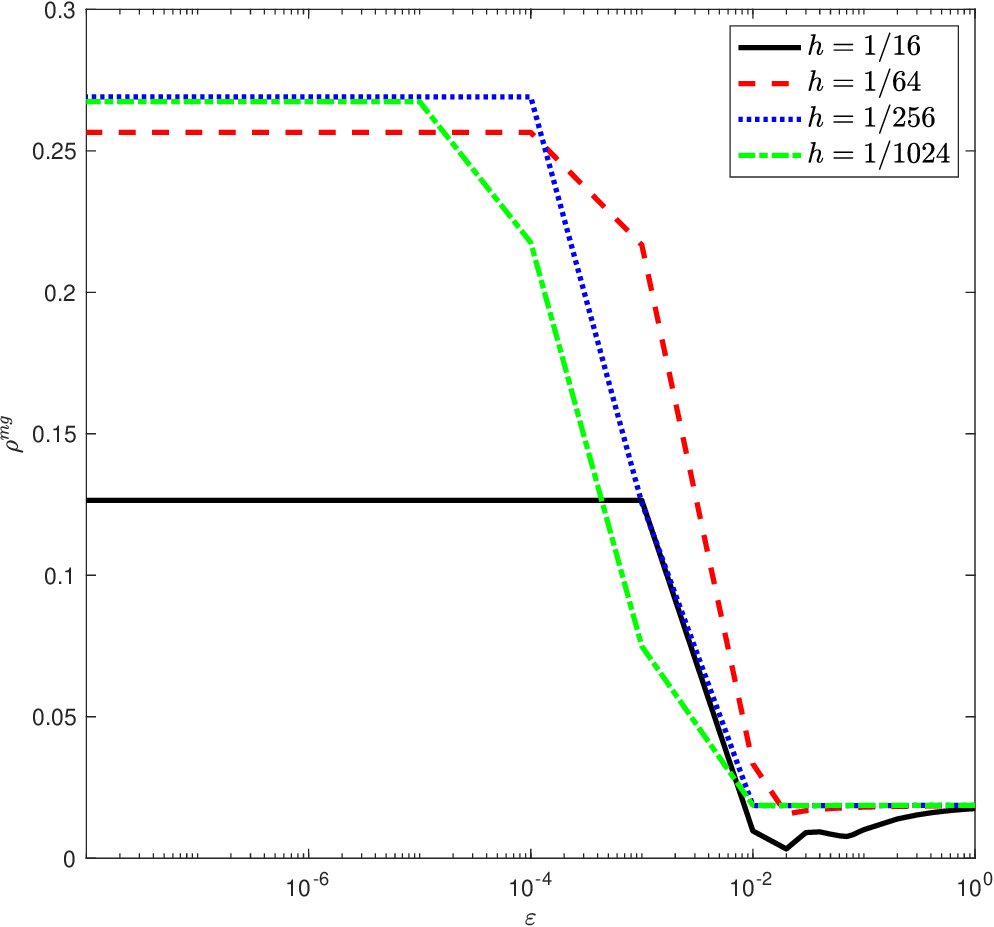}
	}
		\subfloat[$\bm{\beta}=(-{\sqrt{3}\over 2},-{1\over 2})$]{
		\includegraphics[width=.32\textwidth]{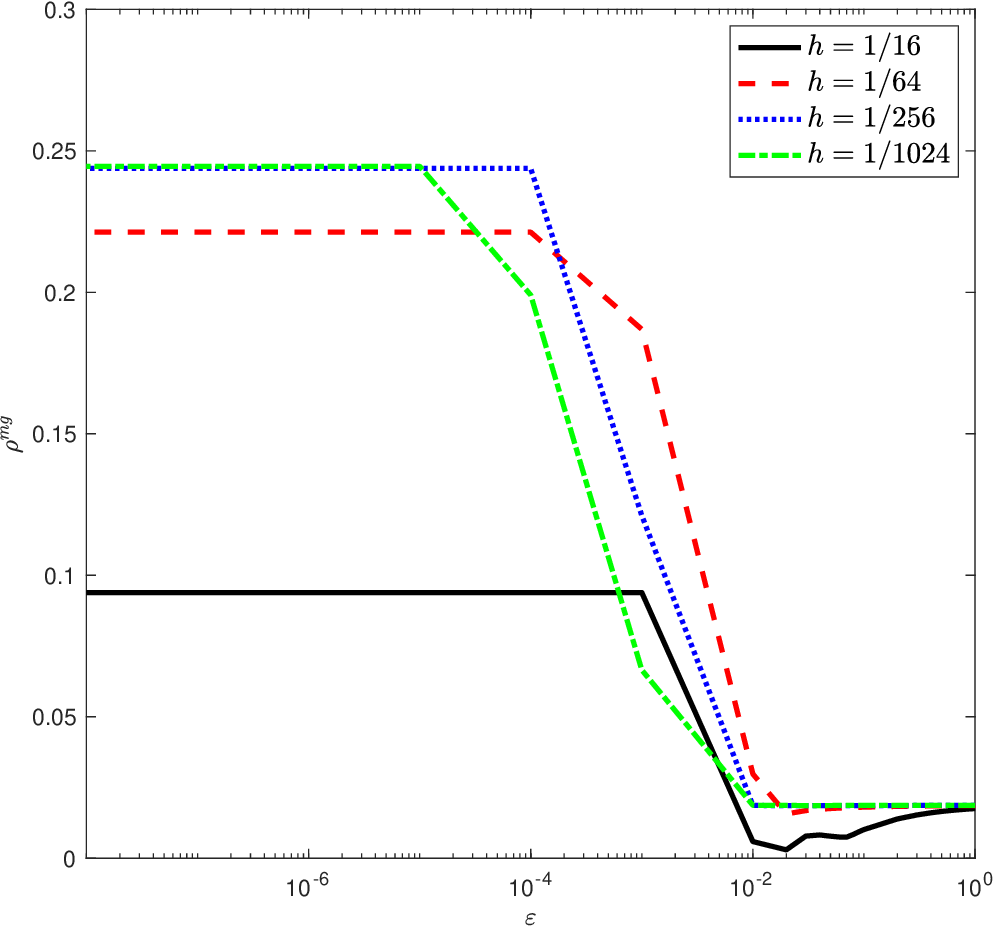}
	}
	\caption{Plots of asymptotic convergence factor $\rho^{\rm mg}$ as a function of $\varepsilon$ given different $h$ and $\bm{\beta}$.}
	\label{fig:2grid}
\end{figure}

\begin{remark}
	For $\bm{\theta} = (0,0)$,  the above derivation does not hold because $\mathcal{A}_c(\bm{\theta})$ is singular. This is a consequence of the absence of boundary conditions, which is generic for a local Fourier analysis.
\end{remark}

\section{Numerical tests}\label{sec:numerical}
In this section, we present some numerical results to illustrate the performance of solvers for $\bm{H}(\rm curl)$ convection-diffusion problems in two dimensions. The meshes used in the experiments are nested and regular, uniformly rectangular partition of $\Omega=(0,1)^2$. We set $\bar{\varepsilon}|_T = \varepsilon(\bm{x}_c|T)$ and $\bar{\bm{\beta}}|_T = \bm{\theta}(\bm{x}_c|_T)$ on each element $T$, and $\bar{\bm{\beta}}|_E = \bm{\theta}(\bm{x}_c|_E)$ on each edge $E$ for the evaluation of operator $J^{\rm grad}_{\bar{\varepsilon},\bar{\bm{\beta}},h}$. The strategy for selecting 
$\bar{\varepsilon}|_E$ in the presence of jump diffusion coefficients will be provided later.

\subsection{Performance of MG}
We present some results demonstrating the performance of Algorithm \ref{alg:mg} for 2D $\bm{H}(\rm curl)$ convection-diffusion problems. We provide the results of both MG iteration and the GMRES iteration using MG as a preconditioner. The termination criterion for the algorithm is set as $\|{r^{[\nu]}}\|/\|{r}^{[0]}\| \le 10^{-8}$, where ${r}^{[\nu]}:=b-Ax^{[\nu]}$ represents the residual at the $\nu$-th iteration. In this subsection, we set $\bm{\beta}=(x_2-0.25,0.75-x_1)^T$ and $\gamma=1$.

\paragraph{Example 1 (constant diffusion coefficients)} 

Table \ref{tab:itermg1} shows the correspond iteration numbers of MG and PGMRES for various $\varepsilon$. We can observe that the iteration number is level independent and the number of iteration in convection dominated case slightly increases. 

\begin{table}[!htbp]
	\centering
	\caption{Number of iteration for example 1 with various $\varepsilon$.}
			\setlength{\tabcolsep}{3.2mm}
	\begin{tabular}{|c|c|ccccc|}
		\hline
		\multicolumn{2}{|c|}{Level}  & 6 & 7 & 8 & 9 & 10\\
		\hline
		\multicolumn{2}{|c|}{DOFs}  & 8,320 & 33,024 &131,584 & 525,312 & 2,099,200\\
		\cline{1-7}
		\multirow{2}*{$\varepsilon=1$}
	& MG & 4& 4& 4& 4 & 4\\
	& PGMRES & 4& 4& 4& 4&4\\
	\hline
	\multirow{2}*{$\varepsilon=10^{-2}$}
	& MG & 4& 4& 4& 4 & 4\\
	& PGMRES & 4& 4& 4& 4&4\\
	\hline
	\multirow{2}*{$\varepsilon=10^{-4}$}
	& MG & 4& 5& 4& 5& 6\\
	& PGMRES & 4& 6& 5& 6&7\\
		\hline
	\end{tabular}
	\label{tab:itermg1}
\end{table}

\begin{table}[!htbp]
	\centering
	\caption{Number of iteration for example 1 using downwind smoother only.}
	\setlength{\tabcolsep}{4mm}
	\begin{tabular}{|c|c|ccccc|}
		\hline
		\multicolumn{2}{|c|}{Level}  & 6 & 7 & 8 & 9 & 10\\
		\hline
		\multirow{2}*{$\varepsilon=1$}
		& MG & $>$100 & $>$100  & $>$100&  $>$100 & $>$100\\
		& PGMRES & 84& $>$100 & $>$100 & $>$100 & $>$100\\
		\hline
		\multirow{2}*{$\varepsilon=10^{-2}$}
		& MG & $>$100& $>$100&$>$100 & $>$100 & $>$100\\
		& PGMRES& 26 & 37&48 &  63 & 85\\
		\hline
		\multirow{2}*{$\varepsilon=10^{-4}$}
		& MG & 38& 71& $>$100& $>$100 & $>$100\\
		& PGMRES & 19& 35& 59 & 98 & $>$100\\
		\hline
	\end{tabular}
	\label{tab:itermg1-1}
\end{table}

\begin{table}[!htbp]
	\centering
	\caption{Number of iteration for example 1  using hybrid smoother in \cite{hiptmair1998multigrid}.}
	\setlength{\tabcolsep}{4mm}
	\begin{tabular}{|c|c|ccccc|}
		\hline
		\multicolumn{2}{|c|}{Level}  & 6 & 7 & 8 & 9 & 10\\
		\hline
		\multirow{2}*{$\varepsilon=1$}
		& MG & 4& 4& 4& 4 & 4\\
		& PGMRES & 4& 4& 4& 4&4\\
		\hline
		\multirow{2}*{$\varepsilon=10^{-4}$}
		& MG & -& -& -& - & -\\
		& PGMRES & 27& $>$100& $>$100 & $>$100&$>$100\\
		\hline
		\multicolumn{2}{l}{- : not converge}
	\end{tabular}
	\label{tab:itermg1-2}
\end{table}

In Table \ref{tab:itermg1-1}, we present the number of iterations for the MG algorithm using only the downwind smoother without the auxiliary problem correction. It can be observed that both iterations converge slowly. While in convection-dominated cases, the number of PGMRES iterations is smaller than that in diffusion-dominated cases, the iterations increase under mesh refinement and is significantly larger than the results obtained using the complete hybrid smoother.  This implies the necessity of a complete hybrid smoother utilizing the auxiliary problem for correction  in $\bm{H}(\rm curl)$ convection-dominated problems.

In Table \ref{tab:itermg1-2}, we display the results when using the hybrid smoother for Maxwell's equations \cite{hiptmair1998multigrid}, where $J^{\rm grad}_{\bar{\varepsilon},\bar{\bm{\beta}},h}$ in Algorithm \ref{alg:hybrid} is replaced with $G$ (matrix representation of gradient operator). As expected, in diffusion-dominated case $\varepsilon=1$, it can have a good  performance. However, in the convection-dominated case ($\varepsilon=10^{-4}$), the algorithm using MG iteration does not converge and the PGMRES iteration converges very slow. This also indicates that for the convection-diffusion problems, it is appropriate to design an asymmetric form given in algorithm \ref{alg:hybrid}.


\paragraph{Example 2 (jump diffusion coefficients)} In this example, we consider problem \eqref{eq:curlcd} with diffusion coefficients that exhibit jumps:
$$
\varepsilon(\bm{x})=
\begin{cases}
	1 & \text{if } x_1\le0.5,\\
	\varepsilon_1 & \text{if }x_1> 0.5.
\end{cases}
$$

\begin{table}[!htbp]
	\centering
	\caption{Number of iteration for example 2 (jump diffusion coefficients).}
	\setlength{\tabcolsep}{6mm}
	\begin{tabular}{|c|c|ccccc|}
		\hline
		\multicolumn{2}{|c|}{Level}  & 6 & 7 & 8 & 9 & 10\\
		\hline
		\multirow{2}*{$\varepsilon_1=0.1$}
& MG & 4& 4& 4& 4 & 4\\
		& PGMRES & 4& 4& 4& 4&4\\
		\hline
		\multirow{2}*{$\varepsilon_1=0.01$}
		& MG & 5& 5& 5& 4 & 4\\
		& PGMRES& 5& 5& 5& 5&5\\
		\hline
		\multirow{2}*{$\varepsilon_1=0.001$}
		& MG & 5& 5& 5& 4 & 4\\
		& PGMRES & 7& 7& 7 & 6 &6\\
		\hline
	\end{tabular}
	\label{tab:itermg2}
\end{table}
Considering the region where the jump exists, we propose choosing the larger part of $\varepsilon$ for the implementation of $J^{\rm grad}_{\bar{\varepsilon},\bar{\bm{\beta}},h}$, which may lead to some improvement. For a detailed discussion of this choice, we refer to \cite{dryja1996multilevel}. Table \ref{tab:itermg2} displays the corresponding iteration numbers for MG and PGMRES. It is observed that with increasing jumps, the number of PGMRES iterations experiences a slight increase. Furthermore, the iteration numbers for both MG and PGMRES remain uniform with respect to the mesh level. Overall, in this example featuring both convection-dominated and diffusion-dominated regions, our proposed MG algorithm is effective and exhibits robust performance under mesh refinement.

\subsection{Convergence factor}
The asymptotic convergence factors calculated by means of the LFA in this paper are checked numerically by applying two-level MG iterations for a zero right-hand side and a random initial guess, which allows the norm of the residual to be reduced much more than with a random right-hand side. 

Table \ref{tab:factor} gives the numerical convergence factor $\rho^{\rm num}=\|b-Ax^{[\nu]} \| / \|b-Ax^{[\nu-1]}\|$, using the Dirichlet boundary condition for a mesh size of $h=1/128$. The analytical  factor $\rho^{\rm mg}$ is calculated by using LFA with a mesh size of $h=1/64$. The setting of different mesh size is motivated by \cite{rodrigo2017validity} which demonstrates comparability under different boundary conditions. The numerical convergence factor is calculated by averaging the convergence factor of 10 iterations. We observe that the convergence factors obtained align closely with the numerical factors calculated under Dirichlet boundary conditions when $\varepsilon$ is relatively large. For smaller $\varepsilon$ the difference is slightly larger but $\rho^{\rm mg}$ can still provide a promising upper bound for $\rho^{\rm num}$. This indicates that LFA also provides valuable insights into the convergence behavior of the algorithm.

\begin{table}[!htbp]
	\centering
	\caption{Comparison of local Fourier analysis results with multigrid convergence for 2D $\bm{H}(\rm curl)$ convection-diffusion problems with $\bm{\beta}=(\frac{\sqrt{3}}{2},\frac{1}{2})^T$. }
	\setlength{\tabcolsep}{3mm}
	\begin{tabular}{c| ccccccc}
		\hline
		$\varepsilon$& 1 & 0.5 & 0.1& $0.01$ & $10^{-4}$&  $10^{-6}$ & $10^{-8}$ \\
		\hline
		$\rho^{\rm mg}$ & 0.0193 & 0.0192 & 0.0186&0.0418 &0.1322 & 0.1322 & 0.1322\\
		$\rho^{\rm num}$& 0.0194& 0.0193 & 0.0188 & 0.0159& 0.0677 & 0.1087 & 0.1119 \\
		\hline
	\end{tabular}
	\label{tab:factor}
\end{table}

\section{Conclusion and future remarks}\label{sec:conclude}
This paper proposes a stable and efficient solver for 2D $\bm{H}(\text{curl})$ convection-diffusion problems. We believe that its design hinges on the following three ingredients: (i) a smoother that utilizes convection information; (ii) effective smoothing of the kernel space of the convection-diffusion operator; and (iii) a multilevel mechanism capable of handling both diffusion-dominated and convection-dominated regimes. For an exponentially fitted numerical discretization, our designed solver seamlessly integrates these three ingredients. Through local Fourier analysis, where we introduce a (modified) Fourier-Helmholtz splitting (Lemma \ref{lm:Helmholtz-splitting}), we have analyzed the effectiveness and convergence behavior of the multigrid algorithm.

We note that for the 3D $\bm{H}(\text{curl})$ convection-diffusion problem on cubic grids, the three aforementioned ingredients can be directly implemented following the approach described in this paper. The only aspect that requires attention is the structure of the 3D stencil. In fact, by following the same logic as in Section \ref{subsec:downwind}, a downwind smoother for sweeping can be designed. However, for unstructured grids, it is important to note that the design of downwind smoothers for edge-based finite elements becomes more complicated due to the intricate structure.

For the Maxwell's problems, another highly effective approach is to design preconditioners, such as the well-known Hiptmair-Xu (HX) preconditioner \cite{hiptmair2007nodal}. Utilizing auxiliary space techniques, the HX preconditioner reduces the 3D problem to solving four Poisson problems, to which the multilevel algorithm can be applied. Building on this idea, we can also consider modifying some of the transfer operators related to the auxiliary spaces within HX preconditioners, employing operators associated with convection, such as flux operators ${\mathcal J}^{\rm curl}_{\varepsilon,\bm{\beta}}$ and some weighted interpolations. Designing and analyzing preconditioners along these lines is a topic of our ongoing research, and we hope to make progress in the future.


\bibliographystyle{siamplain}
\bibliography{SAFE_solver}

\end{document}